\documentclass[10pt,a4paper]{article}

\usepackage[a4paper,margin=2.5cm]{geometry}
\usepackage{amsfonts,amsgen,amsmath,amstext,amsbsy,amsopn,amsthm,cases,listings}
\usepackage{amssymb,mathtools,mathrsfs,bm,bbm,dsfont}
\usepackage{microtype}
\usepackage{multirow,booktabs,longtable,float}
\usepackage{graphicx,epstopdf}
\usepackage{pgf,tikz,pgfplots}
\usetikzlibrary{arrows}
\pgfplotsset{compat=1.18}
\usepackage[marginal]{footmisc}
\usepackage{enumitem}
\usepackage{url}
\usepackage{authblk}
\usepackage{aliascnt}
\usepackage{wasysym,empheq,aligned-overset,subfigure}
\usepackage[T1]{fontenc}
\usepackage[thinc]{esdiff}
\usepackage[colorlinks=true,linkcolor=blue,citecolor=blue,urlcolor=blue]{hyperref}
\usepackage[nameinlink,capitalize]{cleveref}

\allowdisplaybreaks[1]
\setlist[itemize]{topsep=3pt,itemsep=2pt}
\setlist[enumerate]{topsep=3pt,itemsep=2pt}

\definecolor{uuuuuu}{rgb}{0.27,0.27,0.27}
\definecolor{sqsqsq}{rgb}{0.1255,0.1255,0.1255}

\theoremstyle{plain}
\newtheorem{theorem}{Theorem}[section]
\newaliascnt{lemma}{theorem}
\newtheorem{lemma}[lemma]{Lemma}
\aliascntresetthe{lemma}
\newaliascnt{proposition}{theorem}
\newtheorem{proposition}[proposition]{Proposition}
\aliascntresetthe{proposition}
\newaliascnt{corollary}{theorem}
\newtheorem{corollary}[corollary]{Corollary}
\aliascntresetthe{corollary}
\newaliascnt{claim}{theorem}

\aliascntresetthe{claim}

\theoremstyle{definition}
\newaliascnt{definition}{theorem}

\aliascntresetthe{definition}
\newaliascnt{remark}{theorem}

\aliascntresetthe{remark}
\newaliascnt{conjecture}{theorem}

\aliascntresetthe{conjecture}
\newaliascnt{problem}{theorem}

\aliascntresetthe{problem}
\newaliascnt{observation}{theorem}

\aliascntresetthe{observation}
\newaliascnt{example}{theorem}

\aliascntresetthe{example}
\theoremstyle{plain}

\crefname{theorem}{Theorem}{Theorems}
\crefname{proposition}{Proposition}{Propositions}
\crefname{lemma}{Lemma}{Lemmas}
\crefname{corollary}{Corollary}{Corollaries}
\crefname{claim}{Claim}{Claims}
\crefname{problem}{Problem}{Problems}
\crefname{definition}{Definition}{Definitions}
\crefname{remark}{Remark}{Remarks}
\crefname{section}{Section}{Sections}
\crefname{subsection}{Subsection}{Subsections}

\DeclareMathOperator{\ex}{ex}
\DeclareMathOperator{\supp}{supp}

\makeatletter
\newsavebox\myboxA
\newsavebox\myboxB
\newlength\mylenA
\newcommand*\xoverline[2][0.81]{%
    \sbox{\myboxA}{$\m@th#2$}%
    \setbox\myboxB\null%
    \ht\myboxB=\ht\myboxA%
    \dp\myboxB=\dp\myboxA%
    \wd\myboxB=#1\wd\myboxA%
    \sbox\myboxB{$\m@th\overline{\copy\myboxB}$}%
    \setlength\mylenA{\the\wd\myboxA}%
    \addtolength\mylenA{-\the\wd\myboxB}%
    \ifdim\wd\myboxB<\wd\myboxA%
       \rlap{\hskip 0.5\mylenA\usebox\myboxB}{\usebox\myboxA}%
    \else
        \hskip -0.5\mylenA\rlap{\usebox\myboxA}{\hskip 0.5\mylenA\usebox\myboxB}%
    \fi}
\makeatother

\newcommand{\calA}{\mathcal A}
\newcommand{\calB}{\mathcal B}

\newcommand{\calF}{\mathcal F}

\newcommand{\calT}{\mathcal T}

\newcommand{\HH}{\mathbb H}

\newcommand{\dirE}{\vec E}

\title{\bf\Large Upper Bounds on Tur\'an Densities via Extremal Set Theory}
\date{\today}
\author[1]{Yaobin~Chen\thanks{Email:\texttt{ybchen21@m.fudan.edu.cn}}}
\author[2]{Xizhi~Liu\thanks{Email:\texttt{liuxizhi@ustc.edu.cn}}}
\author[3]{Ningyuan~Yang\thanks{Email:\texttt{nyyang23@m.fudan.edu.cn}}}
\author[2]{Tianming~Zhu\thanks{Email:\texttt{zhutianming@mail.ustc.edu.cn}}}
\affil[1]{\small Shanghai Center for Mathematical Sciences, Fudan University, Shanghai, China}
\affil[2]{\small School of Mathematical Sciences, University of Science and Technology of China, Hefei, China}
\affil[3]{\small School of Mathematical Sciences, Fudan University, Shanghai, China}

\begin{document}
\maketitle
\begin{abstract}
    We exhibit, in a systematic way, connections between hypergraph Tur\'an problems and extremal set theory.  More specifically, we construct natural families of uniform hypergraphs for which the upper bounds on their Tur\'an densities reduce to classical problems in extremal set theory, including the Erd\H{o}s--Ko--Rado theorem, $L$-intersecting families, and the Erd\H{o}s matching problem.
\end{abstract}

% \renewcommand\thefootnote{}
% \footnotetext{\textit{Keywords:} hypergraph, Tur\'an density, entropy, homomorphism, extension construction.\\[0.15em]
% \textit{MSC2020:} 05C35, 05C65, 05D05.}
% \addtocounter{footnote}{-1}
% \renewcommand\thefootnote{\arabic{footnote}}

%%%%%%%%%%%%%%%%%%%%%%%%%%%%%%%%%%%%%%%%
\section{Introduction}\label{SEC:Introduction}

Let $r\ge2$ be an integer.  An \emph{$r$-uniform hypergraph}, or \emph{$r$-graph}, is a family of $r$-subsets of a vertex set.  For a hypergraph $H$, let $V(H)$ denote its vertex set, and write $v(H)\coloneqq |V(H)|$ for its number of vertices.  We identify a hypergraph with its edge set, and hence write $|H|$ for the number of edges of $H$.  

Given a family $\calF$ of $r$-graphs, an $r$-graph $H$ is \emph{$\calF$-free} if it contains no member of $\calF$ as a subgraph.  Its \emph{Tur\'an number} is
\[
        \ex(n,\calF)\coloneqq\max\left\{|H|: |V(H)|=n\text{ and } H\text{ is }\calF\text{-free}\right\},
\]
and its \emph{Tur\'an density} is
\[
        \pi(\calF)\coloneqq\lim_{n\to\infty}{\ex(n,\calF)}/{\tbinom{n}{r}}.
\]
We write $\ex(n,F)$ and $\pi(F)$ when $\calF=\{F\}$.

Tur\'an theory is one of the central topics in extremal combinatorics, beginning with Mantel's theorem \cite{Mantel07} for triangles and Tur\'an's theorem \cite{Turan41} for complete graphs.  For an ordinary graph $F$ with at least one edge, its Tur\'an density is governed by the chromatic number: the Erd\H{o}s--Stone--Simonovits theorem \cite{ErdosStone46,ErdosSimonovits66} gives $\pi(F)=1-1/(\chi(F)-1)$.  Hypergraph Tur\'an theory is far from well understood.  For $r\ge3$, there is no comparable chromatic-type principle, and even the value of $\pi(K_4^{3})$, one of the central problems already present in Tur\'an's work, remains open.  For broader context, see Sidorenko's survey \cite{Sidorenko95} and Keevash's survey \cite{Keevash11} on hypergraph Tur\'an theory.

Extremal set theory is another central area of extremal combinatorics.  Early landmarks include the Erd\H{o}s--Ko--Rado theorem \cite{EKR61} and Katona's foundational intersection theorem \cite{Katona64}, and the area asks how large a family of finite sets can be under restrictions on intersections, matchings, packings, shadows, and related forbidden configurations; representative results include Wilson's exact $t$-intersecting theorem \cite{Wilson84}, the Ahlswede--Khachatrian complete intersection theorem \cite{AhlswedeKhachatrian97}, the Frankl--F\"uredi forbidden-intersection theorems \cite{FranklFuredi85}, the Frankl--Wilson algebraic method \cite{FranklWilson81}, Bollob\'as's set-pair inequality \cite{Bollobas65}, R\"odl's packing theorem \cite{Rodl85}, and the Erd\H{o}s matching theorem \cite{ErdosMatching65}, while Frankl and Tokushige's survey \cite{FranklTokushige16} and monograph \cite{FranklTokushige18} give general references.

Connections between hypergraph Tur\'an problems and extremal set theory have appeared in several forms in the literature.  Lagrangian versions of intersecting-family and Erd\H{o}s--Ko--Rado type results can drive hypergraph Tur\'an theorems for extensions; examples include Hefetz and Keevash's work \cite{HefetzKeevash13}, the work of Jiang, Peng, and Wu on Lagrangian densities and Tur\'an numbers of extensions \cite{JiangPengWu18}, and Watts, Norin, and Yepremyan's work \cite{WattsNorinYepremyan19} in this direction.  Frankl's theorem \cite{Frankl90} states that if $k\ge1$ and $C^{2k}_3$ is the $2k$-uniform expanded triangle whose vertex set is the disjoint union $K_1\cup K_2\cup K_3$ of three $k$-sets and whose edges are $K_1\cup K_2$, $K_1\cup K_3$, and $K_2\cup K_3$, then $\pi(C^{2k}_3)=\frac12$.
His proof \cite{Frankl90} uses Katona's cycle method, a fundamental method in extremal set theory.

In this work, we initiate a more systematic study of connections between hypergraph Tur\'an problems and extremal set theory.  We first fix the extension convention used in the definitions below.  A \emph{partial hypergraph} is a finite vertex set together with a finite collection $e_1,\ldots,e_t$ of subsets (not necessarily of the same size) of its vertex set, called \emph{partial edges}.  If all partial edges have size at most $r$, its \emph{$r$-uniform extension} is the $r$-graph obtained by adding, for each $j\in[t]$, a private set $U_j$ of $r-|e_j|$ new vertices and declaring $e_j\cup U_j$ to be an edge.  The private sets are chosen disjointly for different partial edges; if $|e_j|=r$, then $U_j=\emptyset$.  Thus every specified partial edge is extended separately.
We write $[q]\coloneqq\{1,\ldots,q\}$ and $[p,q]\coloneqq\{p,p+1,\ldots,q\}$ for integer intervals, with the convention $[p,q]=\emptyset$ when $p>q$.  We now list the applications proved later in the paper.

%%%%%%%%%%%%%%%%%%%%%%%%%%%%%%%%%%%%%%%%
\subsection{Hypergraph triangles and \texorpdfstring{$L$}{L}-intersecting families}

Let $r,a,b,c,d$ be integers satisfying
\begin{equation}\label{eq:first-assumptions}
\begin{gathered}
        a\ge1,
        \qquad 0\le d\le c\le b\le r,
        \qquad b+c-d\le r,
        \quad\text{and}\quad a+b\le r.
\end{gathered}
\end{equation}
Define $T^r_{a,b,c,d}$ as follows (see \Cref{fig:triangle-construction}).  Choose disjoint sets $A$ and $E$ with $|A|=a$ and $|E|=r$, and choose $B,C\subseteq E$ such that $|B|=b$, $|C|=c$, and $|B\cap C|=d$.  Let $T^r_{a,b,c,d}$ be the $r$-uniform extension of the partial hypergraph on $A\cup E$ with partial edges $\{E,~ A\cup B,~ A\cup C\}$. 

\begin{figure}[ht]
\centering
\begin{tikzpicture}[scale=0.88, every node/.style={font=\small}]
    \coordinate (Ectr) at (0,0);
    \coordinate (Lctr) at (-1.08,0.78);
    \coordinate (Rctr) at (1.08,0.78);

    \fill[blue!18, opacity=0.50] (Ectr) ellipse (3.05 and 1.16);
    \begin{scope}
        \clip (Ectr) ellipse (3.05 and 1.16);
        \begin{scope}[rotate around={20:(Lctr)}]
            \fill[green!35, opacity=0.58] (Lctr) ellipse (2.18 and 1.34);
        \end{scope}
    \end{scope}
    \begin{scope}
        \clip (Ectr) ellipse (3.05 and 1.16);
        \begin{scope}[rotate around={-20:(Rctr)}]
            \fill[yellow!55!red!30, opacity=0.58] (Rctr) ellipse (2.18 and 1.34);
        \end{scope}
    \end{scope}
    \begin{scope}
        \clip[rotate around={20:(Lctr)}] (Lctr) ellipse (2.18 and 1.34);
        \clip[rotate around={-20:(Rctr)}] (Rctr) ellipse (2.18 and 1.34);
        \pgfseteorule
        \clip (-3.7,-1.5) rectangle (3.7,2.7)
            (Ectr) ellipse (3.05 and 1.16);
        \pgfsetnonzerorule
        \fill[red!28, opacity=0.70] (-3.7,-1.5) rectangle (3.7,2.7);
    \end{scope}

    \draw[very thick, black] (Ectr) ellipse (3.05 and 1.16);
    \begin{scope}[rotate around={20:(Lctr)}]
        \draw[very thick, black] (Lctr) ellipse (2.18 and 1.34);
    \end{scope}
    \begin{scope}[rotate around={-20:(Rctr)}]
        \draw[very thick, black] (Rctr) ellipse (2.18 and 1.34);
    \end{scope}

    \node at (0,1.61) {$A$};
    \node at (-1.24,0.12) {$B\setminus C$};
    \node at (1.24,0.12) {$C\setminus B$};
    \node at (0,0.48) {$B\cap C$};
    \node at (-2.35,1.12) {$U_B$};
    \node at (2.35,1.12) {$U_C$};
    \node at (0,-1.48) {$E$};
    \node[anchor=east] at (-2.38,1.72) {$A\cup B\cup U_B$};
    \node[anchor=west] at (2.38,1.72) {$A\cup C\cup U_C$};
\end{tikzpicture}
\caption{The $r$-graph $T^r_{a,b,c,d}$.  The three ellipses are the three $r$-edges after extension; $U_B$ and $U_C$ are the private vertices added to the two side partial edges.}
\label{fig:triangle-construction}
\end{figure}

For $L\subseteq[0,b]$, a family $\calB\subseteq\tbinom{[r]}{b}$ is \emph{$L$-intersecting} if $|B\cap B'|\in L$ for all distinct $B,B'\in\calB$.  Define
\[
        \Psi(r,b,L)\coloneqq\max\left\{|\calB|:\calB\subseteq\tbinom{[r]}{b}
        \text{ is }L\text{-intersecting}\right\}.
\]

The first result reduces the upper bound for this extension triangle to the corresponding $L$-intersecting problem on $b$-subsets of $[r]$.

\begin{theorem}\label{thm:first-main}
Assume \cref{eq:first-assumptions}, and put $L \coloneqq [0,d-1]\cup[b-c+d+1,b]$.  Then
\[
        \pi\bigl(T^r_{a,b,c,d}\bigr)
        \le \frac{\Psi\left(r,b,L\right)}{\binom{r}{b}}.
\]
In particular, if $i\ge1$ and $r \ge 2i$, then $\pi(T^r_{i,r-i,r-i,r-2i})\le\frac{i}{r}$. 
\end{theorem}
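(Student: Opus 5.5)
The approach is to show that, inside a $T^r_{a,b,c,d}$-free $r$-graph with few "light" sets, the $b$-subsets of any edge that extend through a fixed outside $a$-set form an $L$-intersecting family. A double count with Cauchy--Schwarz then turns the bound $\Psi(r,b,L)$ on such families into the density bound.

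\emph{Cleaning.} Fix $\varepsilon>0$ and a $T^r_{a,b,c,d}$-free $r$-graph $H$ on $[n]$ with $n$ large. Call an $(a+b)$-set $S$ \emph{heavy} if it lies in at least $\varepsilon n^{r-a-b}$ edges of $H$. Deleting every edge that contains a light $(a+b)$-set removes at most $\binom{r}{a+b}\binom{n}{a+b}\varepsilon n^{r-a-b}=O(\varepsilon)n^r$ edges, which is possible because $a+b\le r$. Call the result $H'$; every $(a+b)$-subset of an edge of $H'$ is heavy in $H$.

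\emph{Key claim.} Fix an edge $E\in H'$ and an $a$-set $A$ disjoint from $E$, and let
\[
\calF_{A,E}\coloneqq\Bigl\{B\in\tbinom{E}{b}: A\cup B\text{ is heavy}\Bigr\}.
\]
I claim $\calF_{A,E}$ is $L$-intersecting. Suppose instead that $B,B'\in\calF_{A,E}$ satisfy $|B\cap B'|=j$ with $d\le j\le b-c+d$.
\begin{itemize}
\item Choose $C\subseteq B'$ consisting of $d$ elements of $B\cap B'$ (possible since $j\ge d$) and $c-d$ elements of $B'\setminus B$ (possible since $b-j\ge c-d$). Then $|C|=c$ and $|B\cap C|=d$.
\item Both $A\cup B$ and $A\cup B'$ are heavy, each lying in $\Omega(\varepsilon n^{r-a-b})$ edges. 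Only $O(n^{r-a-b-1})$ of these edges meet a prescribed bounded vertex set outside $A\cup B$ (respectively $A\cup B'$).
\item So we may pick an edge $f\supseteq A\cup B$ with $f\setminus(A\cup B)$ disjoint from $E$, and then an edge $g\supseteq A\cup B'$ with $g\setminus(A\cup B')$ disjoint from $E\cup f$.
\end{itemize}
Now $E$, $f$, $g$ form a copy of $T^r_{a,b,c,d}$, in which $g$ plays the role of the extended $A\cup C$. Its private set is $g\setminus(A\cup C)$, which consists of $B'\setminus C\subseteq E$ together with $g\setminus(A\cup B')$. Therefore $|\calF_{A,E}|\le\Psi(r,b,L)$.

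I expect the main obstacle to be the private vertices of the $C$-edge. As just noted, that private set contains $B'\setminus C$, which lies inside $E$, whereas the definition of the extension demands fresh vertices outside $E$. To fix this I would instead define heaviness for $(a+c)$-sets, or require that $A\cup C'$ be heavy for every relevant $c$-subset $C'$ of $B'$, so that edges through $A\cup C$ avoiding $E$ can be chosen directly. An alternative is to phrase everything through a blow-up/supersaturation argument, using that $\pi$ is unchanged under blow-ups, so that partial-edge extensions can always be taken disjoint. I would try the blow-up route first, since it gives disjointness for free.

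\emph{Double counting.} Let $N$ be the number of triples $(E,A,B)$ with $E\in H'$, $B\in\binom{E}{b}$, $A$ an $a$-set disjoint from $E$, and $A\cup B$ heavy. By the key claim, $N\le|H'|\binom{n}{a}\Psi(r,b,L)$.

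For a lower bound, fix a $b$-set $B$ of degree $d(B)$ in $H'$. The $a$-sets $A$ with $A\cup B$ heavy contain the $a$-shadow of the link of $B$, an $(r-b)$-graph. Since normalized shadow density is at least normalized density and $a\le r-b$, there are at least $(d(B)/\binom{n}{r-b})\binom{n}{a}-O(n^{a-1})$ such sets $A$. Summing over $E\ni B$ and applying Cauchy--Schwarz to $\sum_B d(B)^2$ with $\sum_B d(B)=\binom{r}{b}|H'|$ gives
\[
N\gtrsim \frac{\binom{r}{b}^2|H'|^2}{\binom{n}{b}\binom{n}{r-b}}\binom{n}{a}.
\]
Comparing the two bounds and using $\binom{n}{b}\binom{n}{r-b}\approx\binom{r}{b}\binom{n}{r}$ yields $|H'|\le(\Psi(r,b,L)/\binom{r}{b}+o(1))\binom{n}{r}$. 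Letting $\varepsilon\to0$ proves the main bound.

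\emph{The special case.} Take $a=i$, $b=c=r-i$, $d=r-2i$. Then $L=[0,r-2i-1]\cup[r-2i+1,r-i]$, so the only forbidden intersection size is $r-2i$. Passing to complements in $[r]$, two $(r-i)$-sets meet in exactly $r-2i$ elements precisely when their complementary $i$-sets are disjoint. Hence $\calB$ is $L$-intersecting if and only if its complements form an intersecting family of $i$-sets. By the Erd\H{o}s--Ko--Rado theorem (valid as $r\ge2i$), $\Psi\le\binom{r-1}{i-1}$, and $\binom{r-1}{i-1}/\binom{r}{r-i}=i/r$.
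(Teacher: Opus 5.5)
Your route is genuinely different from the paper's, and it works once the obstacle you flagged is resolved. That obstacle is real, though, so here it is precisely. As written, the key claim fails whenever $c<b$. The private part $g\setminus(A\cup C)$ contains $B'\setminus C\subseteq E$, so $E,f,g$ is only a homomorphic image of $T^r_{a,b,c,d}$, and $T^r_{a,b,c,d}$-freeness of $H$ gives no contradiction. When $b=c$, as in the Erd\H{o}s--Ko--Rado special case, $C=B'$ and your argument is already fine.

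Your blow-up option is the right repair, and it makes the cleaning step unnecessary. Every homomorphic image $F'$ of $F\coloneqq T^r_{a,b,c,d}$ satisfies $F\subseteq F'[v(F)]$, and blowing up does not change Tur\'an density. Hence $\pi(F)$ is at most the limiting maximum density of $F$-hom-free $r$-graphs. For such $H$, put $B\in\calF_{A,E}$ exactly when $A\cup B$ lies in some edge of $H$. If $d\le|B\cap B'|\le b-c+d$, then $E$, $A\cup B$, and $A\cup C\subseteq A\cup B'$ are each mapped injectively into edges, so \cref{lem:extension-equivalence} gives $F\to H$, a contradiction. Your lower bound counts exactly these $A$: the $a$-shadow of the link of $B$, minus $O(n^{a-1})$ sets meeting $E$. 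Using the exact identity $\binom{n}{b}\binom{n-b}{r-b}=\binom{r}{b}\binom{n}{r}$, the comparison yields $|H|\le\bigl(\Psi(r,b,L)/\binom{r}{b}+O(1/n)\bigr)\binom{n}{r}$.

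The cleaning version can also be rescued, but only with two changes. First, delete edges containing light $(a+b)$-sets \emph{or} light $(a+c)$-sets. Second, define $\calF_{A,E}$ by ``$A\cup B$ lies in an edge of $H'$''. Then $A\cup C\subseteq A\cup B'$ is $(a+c)$-heavy, and $g$ can be chosen through $A\cup C$ avoiding $E\cup f$.

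The paper does not count edges directly. It fixes an arbitrary symmetric random ordered edge and, for each $B$, samples a random homomorphism $Y^B$ of the partial forest generated by $E$ and $A\cup\hat B$ (\cref{lem:forest-sampling}). It proves the same $L$-intersecting claim for $\{B:\phi\in\supp(Y^B)\}$ in the hom-free setting. It then combines \cref{lem:mixture} with subadditivity to get the block-product inequality $\prod_{j=r-b-a+1}^{r-b}x_j\le\Psi(r,b,L)/\binom{r}{b}$, and finishes with \eqref{eq:hom-free-entropy-reduction}.

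Your count $N$ is the uniform-measure counterpart of this. The bound $N\le|H|\binom{n}{a}\Psi(r,b,L)$ plays the role of the mixture lemma, and Cauchy--Schwarz plus the normalized-shadow inequality replaces the forest-sampling entropy computation. Your argument is more elementary and self-contained for this theorem, and it would adapt just as easily to \cref{thm:matching-main}. The entropy framework gives a stronger statement, about the ratio sequence of every symmetric random edge. It also handles the product construction of \cref{thm:product-main} by exponent bookkeeping alone, where a counting proof would need a multi-block H\"older-type replacement for your Cauchy--Schwarz step.
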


It is useful to record the following complement symmetry.  For $L\subseteq[0,b]$, put
\[
        L^{\ast}_{r,b}\coloneqq
        \left\{r-2b+\ell:\ell\in L\right\}\cap[0,r-b].
\]
Then
\begin{equation}\label{eq:Psi-complement}
        \Psi(r,b,L)=\Psi(r,r-b,L^{\ast}_{r,b}).
\end{equation}

Indeed, the map $B\mapsto [r]\setminus B$ is a bijection from $\tbinom{[r]}{b}$ to
$\tbinom{[r]}{r-b}$.  For distinct $B,B'\in\tbinom{[r]}{b}$,
\[
        |([r]\setminus B)\cap([r]\setminus B')|
        =r-2b+|B\cap B'|.
\]
Thus the bijection sends $L$-intersecting families of $b$-sets exactly to
$L^{\ast}_{r,b}$-intersecting families of $(r-b)$-sets.

This explains the special case in \cref{thm:first-main}.  If $i\ge1$, $2i\le r$, and $(a,b,c,d) = (i, r-i, r-i, r-2i)$, then $L=[0,r-2i-1]\cup[r-2i+1,r-i]$, and \cref{eq:Psi-complement} gives $\Psi(r,r-i,L)=\Psi(r,i,[1,i])$.  The Erd\H{o}s--Ko--Rado theorem \cite{EKR61} gives $\Psi(r,i,[1,i])=\binom{r-1}{i-1}$ for $2i\le r$, so the ratio in \cref{thm:first-main} is ${\binom{r-1}{i-1}}/{\binom{r}{i}}=\frac{i}{r}$.

More generally, for every prescribed set $L$ of allowed intersection sizes, one can choose a forbidden family whose Tur\'an-density upper bound is governed by the corresponding $L$-intersecting problem. 

\begin{theorem}\label{thm:arbitrary-L}
Let $1\le b<r$, let $a\ge1$ satisfy $a+b\le r$, and let $L\subseteq[0,b]$.  Put
\[
        J_{r,b}(L)\coloneqq
        \left\{j\in[0,b-1]\setminus L: 2b-j\le r\right\}
        \quad\text{and}\quad 
        \calT^r_{a,b,L}\coloneqq
        \left\{T^r_{a,b,b,j}:j\in J_{r,b}(L)\right\}.
\]
Then
\[
        \pi(\calT^r_{a,b,L})
        \le \frac{\Psi(r,b,L)}{\binom{r}{b}}.
\]
\end{theorem}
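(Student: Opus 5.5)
We bound densities by averaging over the edges of a large $\calT^r_{a,b,L}$-free $r$-graph $H$ on $n$ vertices. By supersaturation and blow-up invariance, it suffices to treat the case where $H$ has density $\binom{r}{b}^{-1}\Psi(r,b,L)+\varepsilon$ and to derive a contradiction for large $n$. The plan is a link/shadow counting argument in the spirit of the proof of \cref{thm:first-main}.

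For each edge $E\in H$, call a $b$-subset $B\subseteq E$ \emph{good} (for $E$) if the $(r-b)$-set link of $B$ in $H$, i.e.\ $\{S: B\cup S\in H\}$, is large. Concretely, $B$ is good if $B$ lies in at least $\delta n^{r-b}$ edges of $H$ besides those meeting $E\setminus B$. Edges containing a bad $b$-set can be deleted at cost $o(n^r)$ by the usual cleaning: we iteratively remove edges through $b$-sets of small codegree. After cleaning, every $b$-subset of every edge has codegree $\Omega(n^{r-b})$.

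The key claim is that, for each remaining edge $E$, the family $\calB_E$ of those $b$-subsets $B\subseteq E$ that extend by some common $a$-set is $L$-intersecting. The $a$-set $A\subseteq V(H)\setminus E$ is such that $A\cup B\subseteq$ some edge disjoint from $E\setminus B$ apart from $B$. Indeed, suppose $B,B'\in\calB_E$ share a common $A$, and $j=|B\cap B'|\notin L$. Since $|B\cup B'|=2b-j\le|E|=r$ automatically, we get $j\in J_{r,b}(L)$ whenever $j\le b-1$. Then $E$, $A\cup B\cup U$, $A\cup B'\cup U'$ form a copy of $T^r_{a,b,b,j}$, provided the private parts $U,U'$ can be chosen disjoint from each other and from $E\cup A$. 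This disjointness follows from the codegree lower bound: each of the two extensions has $\Omega(n^{r-a-b})$ choices, so we can pick them greedily avoiding $O(1)$ vertices.

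The counting step then runs as follows. We count pairs $(E,B)$ with $E\in H$, $B\in\binom{E}{b}$, and fix a random $a$-set $A$. The natural route is to consider $(r+a)$-vertex configurations: pick a uniformly random $a$-set $A$ and count, for each edge $E$ disjoint from $A$, the number of $b$-subsets $B\subseteq E$ with $A\cup B$ contained in an edge avoiding $E\setminus B$. This number is at most $\Psi(r,b,L)$ by the claim. On the other hand, we lower bound the expectation by Jensen/Cauchy--Schwarz via the link of $A$: the density of $(b)$-sets $B$ with $A\cup B$ in the shadow ``$\partial_{r-a-b}$ of $H$'' is at least roughly the edge density of $H$. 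An edge $E$ then has, on average, at least about $\binom{r}{b}\cdot d(H)$ such $B$'s. Comparing the two bounds gives $d(H)\le\Psi(r,b,L)/\binom{r}{b}+o(1)$.

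The main obstacle is this averaging step. We must show the expected number of ``extendable'' $b$-subsets of an edge is at least $\binom{r}{b}d(H)-o(1)$, which requires correlating the event $E\in H$ with the events that $A\cup B$ is covered. I would first try to prove it via the standard trick of taking an $(r+a)$-set $X$ uniformly at random, using that $H[X]$ must avoid the forbidden family. The extremal question on $X$ then becomes: given an $r$-graph on $r+a$ vertices, bound the edges meeting the structure. Alternatively, I would use an entropy/symmetrization argument that replaces $H$ by a blow-up where the link of $A$ is ``complete'' whenever nonempty. If that fails, I would fall back on the same reduction the paper uses for \cref{thm:first-main}, specializing $c=b$ and letting $d$ range over $J_{r,b}(L)$, since forbidding all these $T^r_{a,b,b,j}$ simultaneously exactly forbids the intersection sizes outside $L$.
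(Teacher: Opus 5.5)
\textbf{There is a genuine gap: the averaging step is not proved.} Your route differs from the paper's. You work directly with a large $\calT^r_{a,b,L}$-free $r$-graph and use cleaning plus double counting. The paper instead uses the Chao--Yu hom-free entropy reduction: it samples random homomorphisms $Y^B$ from the partial forests generated by $E$ and $A\cup\hat B$, and compares entropies via the mixture lemma.

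Your local claim is the correct analogue of the paper's statement that $\calB(\phi)$ is $L$-intersecting. The claim is that for an edge $E$ and an $a$-set $A$ disjoint from $E$, the sets $B\in\binom{E}{b}$ with $A\cup B$ covered by an edge form an $L$-intersecting family. This holds because a forbidden size $j=|B\cap B'|$ automatically satisfies $j\le b-1$ and $2b-j\le r$, so $j\in J_{r,b}(L)$.

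The decisive step is the lower bound on the number of triples $(E,A,B)$. You flag it as the main obstacle and list tentative strategies, the last of which is simply the paper's proof. Your heuristic does not suffice. Knowing that, for a typical $A$, a uniformly random $b$-set is covered with probability about $d(H)$ says nothing about the $b$-subsets of a random edge, which are degree-weighted. That correlation is exactly what has to be shown.

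There is also a smaller error. Cleaning $b$-sets does not give the $\Omega(n^{r-a-b})$ choices of private parts $U,U'$ that you invoke. You need every covered $(a+b)$-set to have codegree at least $\delta n^{r-a-b}$. You get this by iteratively deleting all edges through $(a+b)$-sets of positive codegree below $\delta n^{r-a-b}$. This removes fewer than $\delta n^r$ edges, and afterwards the greedy choice of $U$ and $U'$, avoiding $E\cup A$ and each other, works for large $n$.

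\textbf{How to close the gap.} Organize the count through the link of $B$ rather than of $A$. Let $H'$ be the cleaned graph. Let $S$ be the number of triples $(E,A,B)$ with $E\in H'$, $A\in\binom{V\setminus E}{a}$, $B\in\binom{E}{b}$, and $A\cup B$ covered by an edge of $H'$. Your claim gives $S\le\Psi(r,b,L)\binom{n}{a}|H'|$.

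For fixed $B$, the admissible $A$ form the $a$-shadow of the link $H'_B$. Double counting bounds its size: each link edge contains $\binom{r-b}{a}$ such $a$-sets, and each $a$-set lies in at most $\binom{n-b-a}{r-b-a}$ link edges. So there are at least $\deg(B)\binom{n-b}{a}/\binom{n-b}{r-b}$ admissible $A$. Each is paired with at least $\deg(B)-an^{r-b-1}$ edges $E\supseteq B$ missing $A$.

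Now apply Cauchy--Schwarz, using $\sum_B\deg(B)=\binom{r}{b}|H'|$ and $\binom{n}{b}\binom{n-b}{r-b}=\binom{r}{b}\binom{n}{r}$. This gives $S\ge\binom{n}{a}\binom{r}{b}|H'|^2/\binom{n}{r}-o\bigl(\binom{n}{a}|H'|\bigr)$. Comparing with the upper bound yields $|H|/\binom{n}{r}\le\Psi(r,b,L)/\binom{r}{b}+O(\delta)+o(1)$.

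With these two repairs your argument is a complete, entropy-free proof. The paper's version relies on black boxes instead: the hom-free reduction absorbs the cleaning, since private vertices need not be disjoint for a homomorphism. The mixture lemma plus subadditivity then replaces the shadow and Cauchy--Schwarz estimates.

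\textbf{A possible sharpening.} If you use Lov\'asz's form of Kruskal--Katona for the shadow of $H'_B$, the counting route should give $\bigl(\Psi(r,b,L)/\binom{r}{b}\bigr)^{(r-b)/a}$. The same sharpening should follow in the paper's proof from $x_1\le\cdots\le x_{r-b}$.
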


Consequently, classical estimates for $L$-intersecting families can be inserted directly into \cref{thm:arbitrary-L}.  For instance, the case $L=[t,b]$ is the usual $t$-intersecting problem, treated by the Erd\H{o}s--Ko--Rado theorem \cite{EKR61}, Wilson's exact theorem \cite{Wilson84}, and the Ahlswede--Khachatrian complete intersection theorem \cite{AhlswedeKhachatrian97}, among others.  Forbidden-intersection sets of Frankl--F\"uredi type \cite{FranklFuredi85}, such as $L(\alpha,\beta)=[0,\alpha-1]\cup[b-\beta,b]$, give further explicit bounds; more general algebraic bounds for prescribed intersection sets go back to the Frankl--Wilson method \cite{FranklWilson81} as well.  Frankl and Tokushige's survey \cite{FranklTokushige16} and monograph \cite{FranklTokushige18} give a broad account of these intersection theorems.

%%%%%%%%%%%%%%%%%%%%%%%%%%%%%%%%%%%%%%%%
\subsection{Matching constructions and the Erd\H{o}s matching problem}

Let $s\ge2$ and let $r,k,a$ be integers satisfying
\begin{equation}\label{eq:matching-assumptions}
\begin{gathered}
        1\le a\le k<r
        \quad\text{and}\quad sk\le r.
\end{gathered}
\end{equation}
Define $T^r_{a,[k,s]}$ as follows.  Choose disjoint sets $A$ and $E$ with $|A|=a$ and $|E|=r$, and choose pairwise disjoint sets $B_1,\ldots,B_s\subseteq E$ with $|B_i|=k$ for every $i\in[s]$.  Put $\bar B_i\coloneqq E\setminus B_i$.  Let $T^r_{a,[k,s]}$ be the $r$-uniform extension of the partial hypergraph on $A\cup E$ with partial edges
\[
        \{E,~ A\cup\bar B_1,~\ldots,~ A\cup\bar B_s\}.
\]
Let
\[
        M(r,k,s-1)\coloneqq\max\left\{|\calB|:\calB\subseteq\tbinom{[r]}{k}\text{ and } \nu(\calB)\le s-1\right\},
\]
where $\nu(\calB)$ is the \emph{matching number} of $\calB$.

\begin{theorem}\label{thm:matching-main}
Under the assumptions in \cref{eq:matching-assumptions},
\[
        \pi\bigl(T^r_{a,[k,s]}\bigr)
        \le \frac{M(r,k,s-1)}{\binom{r}{k}}.
\]
\end{theorem}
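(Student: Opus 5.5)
We bound the Tur\'an density by a supersaturation/averaging argument: we count, inside a dense $T^r_{a,[k,s]}$-free $r$-graph $H$, how many $(r-k)$-subsets of each edge extend by $a$ further vertices to other edges, and compare this with the Erd\H{o}s matching bound. The argument follows the same template as the proof of \cref{thm:first-main}, with the $L$-intersecting condition replaced by the matching condition.

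First, by the standard removal and blow-up reductions, it suffices to show the following. For every $\varepsilon>0$ and large $n$, any $n$-vertex $T^r_{a,[k,s]}$-free $H$ has at most $\bigl(M(r,k,s-1)/\binom{r}{k}+\varepsilon\bigr)\binom nr$ edges, up to $o(n^r)$ errors. For $E\in H$ and a $k$-subset $B\subseteq E$, call $B$ \emph{good} if the $(r-k)$-set $\bar B=E\setminus B$ has large link: there are $\Omega(n^{k})$ edges $F\in H$ with $\bar B\subseteq F$ and $F\cap E=\bar B$.

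Let $\calB_E\subseteq\binom{E}{k}$ be the family of good $k$-sets. The key claim is that if $E$ is an edge of a suitably cleaned $H$, then $\calB_E$ contains no $s$ pairwise disjoint sets. Suppose $B_1,\dots,B_s\in\calB_E$ are pairwise disjoint. We must find a common $a$-set $A$, disjoint from $E$, together with disjoint private sets $U_1,\dots,U_s$, such that $A\cup\bar B_i\cup U_i\in H$ for all $i$. The difficulty is that each good set only guarantees many edges through $\bar B_i$; it does not guarantee a common $A$. We therefore strengthen the notion of goodness by building $A$ into it. Call the $a$-set $A$ and the $k$-set $B$ compatible if $\bar B\cup A$ lies in $\Omega(n^{k-a})$ edges of $H$ avoiding $E$, and let $\calB_{E,A}$ be the family of $k$-sets compatible with $A$. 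If $\calB_{E,A}$ had a matching of size $s$, we could greedily choose disjoint $U_i$, since each has $\Omega(n^{k-a})$ options and only $O(n^{k-a-1})$ choices are blocked. This produces a copy of $T^r_{a,[k,s]}$. Hence $|\calB_{E,A}|\le M(r,k,s-1)$ for every $A$ disjoint from $E$.

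Next comes double counting. Count the triples $(E,B,A)$, where $E\in H$, $B\in\binom{E}{k}$, $A\in\binom{V\setminus E}{a}$, and $A\cup\bar B$ extends to an edge. Summing over $(E,A)$ gives at most $|H|\binom{n}{a}M(r,k,s-1)$ good triples. Counting from the other side, each pair $(S,A)$ with $|S|=r-k$ and $S\cup A$ contained in many edges contributes roughly (number of edges $E\supseteq S$ that avoid $A$) times the compatibility indicator. Heuristically, when we average with weights, the densities balance as in a Cauchy--Schwarz or Kruskal--Katona-type comparison. Concretely, for the $(r-k)$-set $S$, let $d(S)$ be its codegree normalised by $\binom{n}{k}$. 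The number of pairs $(E,B)$ with $E\setminus B=S$ is $d(S)\binom nk$, and compatible pairs require $d(S\cup A)$ to be large. Summing $\sum_S d(S)^2\ge(\sum_S d(S))^2/\binom{n}{r-k}$ by convexity, we get $\rho^2\binom{n}{r-k}\lesssim \rho\cdot M(r,k,s-1)/\binom rk\cdot\binom{n}{r-k}$, where $\rho$ is the edge density. This yields $\rho\le M(r,k,s-1)/\binom rk+o(1)$.

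The main obstacle is making this convexity step rigorous: the "good" relation involves $A$-codegrees rather than plain codegrees, so the Cauchy--Schwarz comparison must be carried out on the $(r-k+a)$-level shadow. My first attempt would be to pass to a blow-up-invariant (symmetrised) extremal $H$, as done for \cref{thm:first-main}. In such an $H$, every $(r-k)$-subset of an edge has either positive-density link or link $o(1)$, and the averaging then follows from an entropy or Lagrangian-type inequality.
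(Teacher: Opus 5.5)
Your counting framework is a genuinely different route from the paper's, and its first half is sound. For every edge $E$ and every $a$-set $A$ disjoint from $E$, the family $\calB_{E,A}$ has no $s$ pairwise disjoint members: this follows from a greedy choice of the $U_i$, using $a\le k$, and needs no cleaning or blow-up. So the number of compatible triples $(E,B,A)$ is at most $|H|\binom{n}{a}M(r,k,s-1)$.

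The gap is the matching lower bound on the number of compatible triples. This is the step that actually yields $\rho\le M(r,k,s-1)/\binom{r}{k}$. You only sketch it heuristically, jumping from ``compatible pairs require $d(S\cup A)$ to be large'' straight to $\sum_S d(S)^2$. You then call it the main obstacle and defer to an unspecified symmetrisation plus an entropy or Lagrangian inequality. As written, nothing converts the $A$-codegree condition into a count governed by $d(S)$ alone. The Cauchy--Schwarz step therefore has nothing to act on, and the proof is incomplete.

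The missing step is short and needs no symmetrisation.

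Use compatibility threshold $\frac{\delta}{2}\binom{n}{k-a}$. For an $(r-k)$-set $S$, let $L_S$ be its link, a $k$-graph with $|L_S|=d_S\binom{n}{k}$. Write $\deg_{L_S}(A)$ for the number of edges of $L_S$ containing $A$. Since $\sum_A\deg_{L_S}(A)=\binom{k}{a}|L_S|$ and $\deg_{L_S}(A)\le\binom{n}{k-a}$, at least $(d_S-\delta-o(1))\binom{n}{a}$ of the $a$-sets $A$ satisfy $\deg_{L_S}(A)\ge\delta\binom{n}{k-a}$.

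Take any edge $E=S\cup K$ and any such $A$ disjoint from $K$. The triple $(E,K,A)$ is compatible, because the completions meeting $K$ number only $O(n^{k-a-1})$. This gives at least $\binom{n}{k}\binom{n}{a}\sum_S d_S(d_S-\delta-o(1))$ compatible triples. Now apply Cauchy--Schwarz at the $(r-k)$-level, using $\sum_S d_S=\binom{r}{k}|H|/\binom{n}{k}$. Comparing with the upper bound gives $|H|\le\bigl(M(r,k,s-1)/\binom{r}{k}+\delta+o(1)\bigr)\binom{n}{r}$. So the $A$-codegrees are handled by a first-moment bound inside each link, not by working on a higher shadow.

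Once completed, your argument is an elementary proof that works directly with $T^r_{a,[k,s]}$-free graphs. The paper, here and in \cref{thm:first-main} alike, uses the Chao--Yu entropy reduction rather than any symmetrisation. For a hom-free $H$ with a symmetric random ordered edge, it samples homomorphisms of the partial forests generated by $E$ and $A\cup(E\setminus\hat D)$, for $D\in\binom{[r]}{k}$. It observes that the sets $D$ whose sample has a given map $\phi$ in its support form a family with matching number at most $s-1$. The mixture lemma then gives the stronger ratio-sequence inequality $\prod_{j=k-a+1}^{k}x_j\le M(r,k,s-1)/\binom{r}{k}$, within the same template the paper uses for all its theorems.
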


The extremal function $M(r,k,s-1)$ is the quantity in the Erd\H{o}s matching conjecture \cite{ErdosMatching65}.  In the present notation, the conjecture predicts
\[
        M(r,k,s-1)
        =
        \max\left\{\tbinom{ks-1}{k},
        \tbinom{r}{k}-\tbinom{r-s+1}{k}\right\};
\]
the two terms come, respectively, from all $k$-sets inside a fixed $(ks-1)$-set and from all $k$-sets meeting a fixed $(s-1)$-set.  The large-ground-set case needed here goes back to Erd\H{o}s \cite{ErdosMatching65}, with important subsequent refinements by Bollob\'as--Daykin--Erd\H{o}s \cite{BollobasDaykinErdos76}, Huang--Loh--Sudakov \cite{HuangLohSudakov12}, Frankl \cite{Frankl13Matching}, and {\L}uczak--Mieczkowska \cite{LuczakMieczkowska14}, while Frankl and Tokushige's monograph \cite{FranklTokushige18} gives a general account of matching problems in extremal set theory.

Thus, when $k$ and $s$ are fixed and $r$ is sufficiently large, the Erd\H{o}s matching theorem evaluates
\[
        M(r,k,s-1)=\tbinom{r}{k}-\tbinom{r-s+1}{k},
\]
so \cref{thm:matching-main} gives
\[
        \pi\bigl(T^r_{a,[k,s]}\bigr)
        \le
        1-\frac{\binom{r-s+1}{k}}{\binom{r}{k}}.
\]

%%%%%%%%%%%%%%%%%%%%%%%%%%%%%%%%%%%%%%%%
\subsection{Product constructions and products of \texorpdfstring{$L$}{L}-intersecting families}

Let $m\ge1$ and $r\ge2$.  Let $a\ge0$, and for each $i\in[m]$ let
\begin{equation}\label{eq:product-coordinate-assumptions}
\begin{gathered}
        0\le d_i\le c_i\le b_i\le r
        \quad\text{and}\quad b_i+c_i-d_i\le r.
\end{gathered}
\end{equation}
Assume
\begin{equation}\label{eq:product-size-assumptions}
\begin{gathered}
        q_0\coloneqq a+\sum_{i\in [m]} b_i\le r
        \quad\text{and}\quad
        a+\sum_{i\in [m]} c_i\le r.
\end{gathered}
\end{equation}
Define $T^r_{a,b_1,c_1,d_1,\ldots,b_m,c_m,d_m}$ as follows.  Choose pairwise disjoint $r$-sets $E_1,\ldots,E_m$, choose a set $A$ disjoint from their union with $|A|=a$, and choose $B_i,C_i\subseteq E_i$ satisfying $|B_i|=b_i$, $|C_i|=c_i$, and $|B_i\cap C_i|=d_i$ for every $i\in[m]$.  Let $T^r_{a,b_1,c_1,d_1,\ldots,b_m,c_m,d_m}$ be the $r$-uniform extension of the partial hypergraph on $A\cup\bigcup_{i\in [m]} E_i$ with partial edges
\[
        \left\{E_1,\ldots,E_m,~
        A\cup\bigcup_{i\in [m]} B_i,~
        A\cup\bigcup_{i\in [m]} C_i\right\}.
\]

\begin{theorem}\label{thm:product-main}
Under the assumptions in \cref{eq:product-coordinate-assumptions,eq:product-size-assumptions},
\[
        \pi\bigl(T^r_{a,b_1,c_1,d_1,\ldots,b_m,c_m,d_m}\bigr)
        \le
        \max_{i\in[m]}\frac{\Psi(r,b_i,L_i)}{\binom{r}{b_i}}, 
\]
where $L_i\coloneqq[0,d_i-1]\cup[b_i-c_i+d_i+1,b_i]$ for $i \in [m]$. 
\end{theorem}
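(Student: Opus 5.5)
We would prove \cref{thm:product-main} with the same density-reduction scheme that underlies \cref{thm:first-main}, run coordinatewise.

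\textbf{Setup.} Let $\varepsilon>0$, and let $H$ be a large $T:=T^r_{a,b_1,c_1,d_1,\ldots,b_m,c_m,d_m}$-free $r$-graph with density at least $\max_i \Psi(r,b_i,L_i)/\binom{r}{b_i}+\varepsilon$. By the supersaturation/blow-up invariance of Tur\'an densities for extensions, we may first pass to a substructure in which every relevant link is dense. Concretely, since $T$ is an $r$-uniform extension, any configuration of \emph{partial} edges with large codegree extends greedily to a copy of $T$. So it suffices to find:
\begin{itemize}
\item edges $E_1,\ldots,E_m\in H$ that are pairwise disjoint;
\item a set $A$ disjoint from $\bigcup_i E_i$;
\item subsets $B_i,C_i\subseteq E_i$ with the prescribed sizes and intersections,
\end{itemize}
such that both $A\cup\bigcup_i B_i$ and $A\cup\bigcup_i C_i$ have codegree at least $\delta n^{r-|\cdot|}$ in $H$. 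Here we call a set $S$ \emph{heavy} if its codegree is at least $\delta n^{r-|S|}$, and we only use the $\ell$-sets with $\ell=q_0$ and $\ell=a+\sum_i c_i$, both of which are at most $r$ by \cref{eq:product-size-assumptions}.

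\textbf{Averaging.} We sample uniformly a random $a$-set $A$ and random disjoint edges $E_1,\ldots,E_m$ of $H$; equivalently, we sample a random $(a+mr)$-set and condition on it containing the edge structure. For fixed $A$ and $E_1,\ldots,E_m$, define for each $i$ the family
\[
\calB_i\subseteq\tbinom{E_i}{b_i}
\]
of those $B_i$ that can be completed to a heavy set, i.e.\ for which $A\cup B_i\cup\bigcup_{j\ne i}B_j$ is heavy for a suitable choice of the other $B_j$. It is cleaner to run the argument as a product: consider
\[
\mathcal P\subseteq\prod_i\tbinom{E_i}{b_i},
\]
the set of tuples $(B_1,\ldots,B_m)$ with $A\cup\bigcup B_i$ heavy. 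A double-counting argument, the analogue of the one for \cref{thm:first-main}, uses the fact that almost all $q_0$-subsets of edges are heavy. Combined with the density assumption, it gives that the expected proportion $|\mathcal P|/\prod_i\binom{r}{b_i}$ exceeds $\max_i\Psi(r,b_i,L_i)/\binom{r}{b_i}$. The reason is that a random $q_0$-set $A\cup\bigcup B_i$, with its parts lying in random edges, is heavy with probability essentially at least the density of $H$. So we fix $A,E_1,\dots,E_m$ with $|\mathcal P|>\max_i \frac{\Psi(r,b_i,L_i)}{\binom r{b_i}}\prod_i\binom r{b_i}$.

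\textbf{Product lemma (key step).} The step I expect to be the main obstacle is the following extremal set-theoretic lemma. If
\[
\mathcal P\subseteq\prod_i\tbinom{[r]}{b_i}
\]
has size greater than $\max_i\frac{\Psi(r,b_i,L_i)}{\binom r{b_i}}\prod_i\binom r{b_i}$, then $\mathcal P$ contains two tuples $(B_i)$ and $(B'_i)$ such that $|B_i\cap B_i'|\notin L_i$ for every $i$ simultaneously. Note that $[0,b_i]\setminus L_i=[d_i,b_i-c_i+d_i]$. Given such a pair, in each coordinate we choose $C_i\subseteq B_i'$ with $|C_i|=c_i$ and $|B_i\cap C_i|=d_i$. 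This is possible because $d_i\le|B_i\cap B_i'|\le b_i-c_i+d_i$: take $d_i$ elements of $B_i\cap B_i'$ and $c_i-d_i$ elements of $B_i'\setminus B_i$. Subsets of heavy sets are heavy, and $A\cup\bigcup C_i\subseteq A\cup\bigcup B_i'$, which yields $T$.

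I would prove the lemma by induction on $m$, slicing on the first coordinate. Call $B_1$ \emph{rich} if its fiber $\mathcal P_{B_1}\subseteq\prod_{i\ge2}\binom{[r]}{b_i}$ has size greater than $\mu\prod_{i\ge2}\binom r{b_i}$, where $\mu$ is the maximal ratio. Averaging then yields more than $\Psi(r,b_1,L_1)$ rich $B_1$'s. This naive slicing does not directly combine, because we need one pair of fibers that together contain a good pair. Instead, I would use the weighted (fractional) version: the relevant quantity is the fractional Hoffman-type bound, valid because $\Psi/\binom{r}{b}$ is the independence ratio of a vertex-transitive Johnson-type graph $G_i$, whose edges join pairs with intersection outside $L_i$. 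The statement then becomes the standard fact that, for tensor-type products of vertex-transitive graphs, the independence ratio of the product
\[
G_1\times\cdots\times G_m
\]
(adjacent iff adjacent in every coordinate) equals $\max_i\alpha(G_i)/|V(G_i)|$ by the no-homomorphism lemma/Zhang's theorem for vertex-transitive graphs. So I would invoke that the independence ratio of a categorical product of vertex-transitive graphs is the maximum of the factors' ratios, and conclude.
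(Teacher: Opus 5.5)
Your set-theoretic core is the paper's. \cref{thm:product-intersection} is proved exactly as you propose: the support family is an independent set in the categorical product of the vertex-transitive graphs $G_i$, and Zhang's theorem is applied inductively. (Zhang's theorem supplies the nontrivial upper bound; the no-homomorphism lemma only gives the trivial lower bound.) Your $C_i\subseteq B_i'$ is the paper's $D_i$.

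The gap is in the density reduction that is supposed to produce a fixed $A,E_1,\dots,E_m$ with $|\mathcal P|>\Lambda$, and then a copy of $T$.

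\textbf{The averaging step is unproved.} There is no double-counting proof of \cref{thm:first-main} to imitate: the paper proves it by the same entropy argument. Your stated reason also does not apply. The set $S\coloneqq A\cup\bigcup_iB_i$ is not a subset of a single edge, so ``almost all $q_0$-subsets of edges are heavy'' says nothing about it. Write $w$ for normalised codegree and $\rho$ for the density of $H$. Each $B_i$ has density $w(B_i)/\rho$ relative to uniform. So what you need is
\[
\mathbb E\bigl[w(S)\textstyle\prod_i w(B_i)\bigr]\ge\rho^{m+1}-o(1)
\]
over uniform disjoint $(A,B_1,\dots,B_m)$. This is a Sidorenko-type inequality for the star-shaped configuration: one edge with $m$ further edges hanging off the disjoint pieces $B_i$. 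It is true, for instance by AM--GM and convexity of $x\log x$, or by entropy. But it is the heart of the reduction and must be proved. In \cref{prop:product-block} the same role is played by $\Theta/\beta^m$, and the needed inequality is $\Theta\ge\beta^{m+1}$. There, \cref{lem:forest-sampling} and \cref{lem:mixture} replace your averaging over $(A,E_1,\dots,E_m)$.

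\textbf{``Subsets of heavy sets are heavy'' is false for the normalisation $\delta n^{r-|S|}$.} Let $S_C\coloneqq A\cup\bigcup_iC_i\subseteq A\cup\bigcup_iB_i'$. Its codegree is only guaranteed to be $\delta n^{r-q_0}$, and all of those edges may contain $\bigcup_i(B_i'\setminus C_i)\subseteq\bigcup_iE_i$. So whenever some $c_i<b_i$, you cannot choose fresh private vertices for $S_C$. You obtain only a homomorphism $T\to H$, not a copy of $T$.

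The paper never meets this problem, because it works with $T$-hom-free $H$ throughout, via the Chao--Yu reduction \cref{eq:hom-free-entropy-reduction} together with \cref{lem:extension-equivalence}. To repair your route you need one of two things:
\begin{enumerate}
\item a hom-free reduction (Chao--Yu, or the removal lemma plus supersaturation); or
\item a second threshold $\delta_1\ll\rho^m\delta$ built into $\mathcal P$, requiring every $A\cup\bigcup_iC_i$ with $C_i\in\binom{B_i}{c_i}$ to have codegree at least $\delta_1n^{r-a-\sum_ic_i}$. A double count shows this extra requirement fails with probability only $O(\rho^{-m}\delta_1/\delta)$.
\end{enumerate}

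With both repairs, your argument becomes a genuine counting analogue of the paper's entropy proof. As written, it is incomplete.
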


Note that the case $m=1$ already contains the result in \cref{thm:first-main}.  We keep the separate proof in \cref{thm:first-main} because it is more transparent and motivates the later reductions.

The paper is organized as follows.  In \cref{SEC:Preliminaries}, we recall the Chao--Yu entropy reduction and the homomorphism monotonicity used throughout.  In \cref{sec:proofs}, we apply this reduction directly to prove the three main bounds.  In \cref{sec:hom-reductions}, we refine the bounds of \cref{thm:first-main,thm:product-main} by applying homomorphic reductions to the forbidden extension constructions.  We end with concluding remarks.

%%%%%%%%%%%%%%%%%%%%%%%%%%%%%%%%%%%%%%%%
\section{Preliminaries}\label{SEC:Preliminaries}

A \emph{homomorphism} $F\to H$ between $r$-graphs is a map $\phi:V(F)\to V(H)$ such that $\phi(e)$ is an edge of $H$ for every $e\in F$; in particular, $\phi$ is injective on every edge of $F$.  An $r$-graph $H$ is \emph{$\calF$-hom-free} if no $F\in\calF$ admits a homomorphism to $H$.
We use $F\to H$ to mean that there exists a homomorphism from $F$ to $H$. 

A \emph{homomorphism from a partial hypergraph} to an $r$-graph $H$ is a map that is injective on every partial edge and sends every partial edge into some edge of $H$.

For a finite set $S$, we write $V(H)^S$ for the set of maps from $S$ to $V(H)$.

Recall that a \emph{simplicial complex} is a family of finite sets closed under taking subsets; its members are called \emph{faces}. 
We call a simplicial complex a \emph{maximal partial hypergraph}; if all faces have size at most $r$, it is a \emph{maximal partial $r$-graph}.\footnote{Chao and Yu \cite{CY24} call this object a partial hypergraph.  In the present paper, the phrase \emph{partial hypergraph} is reserved for the objects used in the extension constructions, where every specified partial edge is extended separately.  We use \emph{maximal partial hypergraph} for the Chao--Yu object in order to keep the two conventions distinct.}  The word ``maximal'' refers to specifying the complex by its inclusion-maximal faces.  A homomorphism from a maximal partial $r$-graph to an $r$-graph $H$ is understood in the same sense, applied to every face.

\begin{lemma}\label{lem:extension-equivalence}
A partial hypergraph whose partial edges have size at most $r$ admits a homomorphism to an $r$-graph $H$ if and only if its $r$-uniform extension admits a homomorphism to $H$.
\end{lemma}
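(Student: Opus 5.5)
We prove the two implications separately; both come straight from the definitions of homomorphism and of the $r$-uniform extension. Let $P$ be the partial hypergraph on vertex set $V$ with partial edges $e_1,\ldots,e_t$, each of size at most $r$. Let $F$ be its $r$-uniform extension, with private sets $U_1,\ldots,U_t$, so that $V(F)=V\cup U_1\cup\cdots\cup U_t$ and the edges of $F$ are $e_j\cup U_j$.

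For the direction "$F\to H$ implies $P\to H$", take a homomorphism $\phi:V(F)\to V(H)$ and restrict it to $V$. For each $j$, the map $\phi$ is injective on the edge $e_j\cup U_j$ and sends it onto an edge of $H$. Hence $\phi|_V$ is injective on $e_j$, and $\phi(e_j)\subseteq\phi(e_j\cup U_j)\in H$. So $\phi|_V$ is a homomorphism from $P$ to $H$.

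For the converse, take a homomorphism $\psi:V\to V(H)$ from $P$. For each $j$, fix an edge $f_j\in H$ with $\psi(e_j)\subseteq f_j$. Since $\psi$ is injective on $e_j$, we have $|\psi(e_j)|=|e_j|$, so
\[
|f_j\setminus\psi(e_j)|=r-|e_j|=|U_j|.
\]
Extend $\psi$ to $U_j$ by any bijection $U_j\to f_j\setminus\psi(e_j)$. This is well defined because the sets $U_j$ are pairwise disjoint and disjoint from $V$; that disjointness is exactly the "private sets chosen disjointly" convention. The resulting map sends $e_j\cup U_j$ bijectively onto $f_j$. Therefore it is injective on each edge of $F$ and maps each edge onto an edge of $H$, so it is a homomorphism $F\to H$.

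The only point needing care is in this second direction. Different private sets may be mapped to overlapping vertices of $H$, and the images of $U_j$ may meet $\psi(V\setminus e_j)$. This is harmless because a homomorphism only has to be injective within each edge, and within a single edge the map is a bijection onto $f_j$.
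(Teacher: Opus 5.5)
Your proposal is correct and follows essentially the same approach as the paper: restrict a homomorphism of the extension for one direction, and for the converse extend by assigning each private set $U_j$ bijectively to the remaining $r-|e_j|$ vertices of a chosen host edge, using that the private sets are disjoint and lie in no other edge. Your remark that injectivity is only needed within each edge is exactly why the independent choices for different $U_j$ cause no problem.
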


\begin{proof}
A homomorphism from the extension restricts to a homomorphism from the partial hypergraph.  Conversely, suppose $\phi$ maps every partial edge $e$ injectively into some edge $E_e$ of $H$.  Since $|E_e|=r$, the remaining vertices of $E_e$ can be assigned to the private vertices $U_e$.  These private sets are disjoint and appear in no other partial edge, so the choices are independent and extend $\phi$ to a homomorphism of the extension.
\end{proof}

We will use the following standard homomorphism monotonicity for Tur\'an density; see, for example, Keevash's survey \cite{Keevash11}.

\begin{proposition}\label{prop:hom-monotonicity}
    Let $F$ and $F'$ be two $r$-graphs. Suppose that there exists a homomorphism from $F$ to $F'$. Then $\pi(F)\le\pi(F')$. More generally, if for every $F'\in\calF'$ there exists $F\in\calF$ with $F\to F'$, then $\pi(\calF)\le\pi(\calF')$.
\end{proposition}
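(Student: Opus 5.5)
The statement to prove is \cref{prop:hom-monotonicity}. I will use the standard blow-up (supersaturation) argument. Suppose $F\to F'$ via a homomorphism $\phi$, and let $t\coloneqq v(F)$. The key observation is that $F$ is a subgraph of a suitable blow-up of $F'$. Define $F'(t)$ by replacing each vertex $u$ of $F'$ by an independent set of $t$ copies of $u$, and each edge by the complete $r$-partite $r$-graph on the corresponding parts. Map $v\in V(F)$ to a copy of $\phi(v)$, using distinct copies for distinct vertices in the same fibre; this is possible since fibres have size at most $t$. Because $\phi$ is injective on edges and sends edges to edges, the result is an embedding $F\subseteq F'(t)$. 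Hence every $F$-free $r$-graph is $F'(t)$-free, and so $\pi(F)\le\pi(F'(t))$.

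It remains to show $\pi(F'(t))=\pi(F')$, the classical blow-up invariance. The inequality $\ge$ is immediate, since $F'\subseteq F'(t)$. For $\le$, I will invoke supersaturation (Erd\H{o}s--Simonovits). Let $\varepsilon>0$ and take $H$ on $n$ vertices with at least $(\pi(F')+\varepsilon)\binom nr$ edges. Averaging over $m$-subsets for a fixed large $m$, a positive proportion of them induce a subgraph with density above $\pi(F')+\varepsilon/2$. Each such subgraph contains a copy of $F'$, and each copy of $F'$ lies in at most $\binom{n-v(F')}{m-v(F')}$ of these $m$-sets. This yields $\delta n^{v(F')}$ copies of $F'$ in $H$.

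Next I form the auxiliary $v(F')$-uniform hypergraph whose edges are the vertex sets of these copies, coloured by the ordered placement of $F'$ on them (finitely many colours). It has positive density, so the Erd\H{o}s theorem on complete $v(F')$-partite hypergraphs, that is, the degenerate Tur\'an density $\pi(K^{(v(F'))}(t,\dots,t))=0$, combined with pigeonhole over the colours, yields a complete partite structure $K(T,\dots,T)$ with a single consistent placement for large $n$. Taking $T$ large enough to absorb the ordering issues, this gives $F'(t)\subseteq H$. Therefore $\pi(F'(t))\le\pi(F')+\varepsilon$ for every $\varepsilon>0$.

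For the family version, assume that for each $F'\in\calF'$ some $F\in\calF$ satisfies $F\to F'$. Let $T$ be the maximum of $v(F)$ over the relevant $F$. The argument above shows that any $\calF$-free $H$ contains no $F'(T)$ for any $F'\in\calF'$. Supersaturation applies to finite families in the same way, giving $\pi(\{F'(T)\})=\pi(\calF')$. The main obstacle is the supersaturation plus Erd\H{o}s partite step, which I would cite from Keevash's survey \cite{Keevash11} rather than reprove in detail. If $\calF'$ is infinite, I would restrict to finite subfamilies and use monotonicity of the limit.
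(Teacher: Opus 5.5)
Your proof is correct and is the standard argument for this statement. The paper gives no proof of its own; it cites Keevash's survey, which derives homomorphism monotonicity the same way you do: embed $F\subseteq F'(v(F))$, then use blow-up invariance $\pi(F'(t))=\pi(F')$, proved via supersaturation and Erd\H{o}s's theorem on complete partite hypergraphs.

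One step is left implicit in your infinite case, namely $\pi(\calF')=\inf\{\pi(\calF'_0):\calF'_0\subseteq\calF'\text{ finite}\}$. It holds because $\ex(n,\calF')=\ex(n,\calF'_{\le n})$, where $\calF'_{\le n}$ is the (finite up to isomorphism) subfamily of members on at most $n$ vertices, and $\ex(m,\cdot)/\binom{m}{r}$ is non-increasing in $m$.
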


Following Chao and Yu \cite{CY24}, we recall the ratio-sequence framework.  A \emph{random ordered edge} of an $r$-graph $H$ is a random variable supported on
\[
        \dirE(H)\coloneqq\left\{(v_1,\ldots,v_r):\{v_1,\ldots,v_r\}\in H\right\}.
\]
It is \emph{symmetric} if its law is invariant under every permutation of the coordinates.  For a symmetric random ordered edge $(X_1,\ldots,X_r)$, define
\[
        x_i\coloneqq2^{\HH(X_i\mid X_{i+1},\ldots,X_r)-\HH(X_i)}\quad\text{for}\quad i\in[r].
\]
Then $0<x_1\le\cdots\le x_r=1$; this follows from symmetry and monotonicity of conditional entropy.  We use the convention that conditioning on an empty tuple is omitted, so $x_r=1$.  If $\beta\coloneqq\prod_{i\in [r]} x_i$, then the chain rule and symmetry give the entropy identity
\[
        \HH(X_1,\ldots,X_r)
        =\sum_{i\in [r]} \HH(X_i\mid X_{i+1},\ldots,X_r)
        =r\HH(X_1)+\log_2\beta.
\]
This identity will be used repeatedly below.

The hom-free entropy reduction of Chao and Yu \cite[Corollary~5.6]{CY24} gives the following form.  If $\calF$ is a finite family of $r$-graphs, then
\begin{equation}\label{eq:hom-free-entropy-reduction}
        \pi(\calF)\le
        \sup\left\{
        \prod_{i\in [r]} x_i:
        \begin{array}{l}
        H\text{ is a finite }\calF\text{-hom-free }r\text{-graph with at least one edge,}\\
        (X_1,\ldots,X_r)\text{ is a symmetric random ordered edge of }H
        \end{array}
        \right\}.
\end{equation}

\begin{lemma}[{\cite[Lemma~3.9]{CY24}}]\label{lem:mixture}
Let $Y_1,\ldots,Y_N$ be random variables taking values in a common finite set $\Omega$.  Suppose every point of $\Omega$ belongs to the support of at most $q$ of the variables $Y_i$.  Then there is a \emph{mixture} $Z$ of $Y_1,\ldots,Y_N$ such that
\[
        \sum_{i\in [N]}2^{\HH(Y_i)}\le q\,2^{\HH(Z)}.
\]
\end{lemma}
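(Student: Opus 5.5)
The plan is to take $Z$ to be a weighted random choice among $Y_1,\ldots,Y_N$, with weights proportional to $2^{\HH(Y_i)}$, and then control $\HH(Z)$ through the chain rule. Put $S\coloneqq\sum_{i\in[N]}2^{\HH(Y_i)}$ and $w_i\coloneqq 2^{\HH(Y_i)}/S$. Let $I$ be a random index, independent of all the $Y_i$, with $\Pr[I=i]=w_i$, and set $Z\coloneqq Y_I$. This $Z$ is a mixture of $Y_1,\ldots,Y_N$ and takes values in $\Omega$. The goal is to show $\HH(Z)\ge\log_2 S-\log_2 q$, which is the claimed inequality after exponentiating.

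The first step is to compute the joint entropy of $(I,Z)$ by conditioning on $I$. Since $I$ is independent of the $Y_i$, conditioning on $I=i$ gives $Z$ the law of $Y_i$. Hence
\[
\HH(I,Z)=\HH(I)+\sum_{i\in[N]}w_i\HH(Y_i).
\]
Substituting $\HH(Y_i)=\log_2(w_iS)$ gives
\[
\HH(I,Z)=\sum_{i\in[N]}w_i\bigl(\log_2(1/w_i)+\log_2(w_iS)\bigr)=\log_2 S.
\]

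The second step is to expand the same quantity the other way, as $\HH(I,Z)=\HH(Z)+\HH(I\mid Z)$, and bound $\HH(I\mid Z)$. Fix $z$ with $\Pr[Z=z]>0$. If $\Pr[I=i,Z=z]>0$, then $\Pr[Y_i=z]>0$, so $z$ lies in the support of $Y_i$. By hypothesis at most $q$ indices $i$ have this property, so conditionally on $Z=z$ the index $I$ is supported on at most $q$ values. Therefore $\HH(I\mid Z=z)\le\log_2 q$ for every such $z$, and averaging gives $\HH(I\mid Z)\le\log_2 q$.

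Combining the two expansions gives $\HH(Z)=\log_2 S-\HH(I\mid Z)\ge\log_2 S-\log_2 q$, that is, $S\le q\,2^{\HH(Z)}$. There is no genuine obstacle here. The only point needing care is the choice of weights $w_i\propto 2^{\HH(Y_i)}$, which makes $\HH(I)+\mathbb E\,\HH(Y_I)$ collapse exactly to $\log_2 S$; this is the equality case of the Gibbs inequality. One should also note that indices with $w_i=0$ cannot occur, since $2^{\HH(Y_i)}\ge1$.
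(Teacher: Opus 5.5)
Your proof is correct and complete: taking weights $w_i\propto 2^{\HH(Y_i)}$ makes $\HH(I,Z)=\log_2\sum_i 2^{\HH(Y_i)}$ exactly, and the support hypothesis gives $\HH(I\mid Z)\le\log_2 q$, which together give the bound. The paper does not prove this lemma itself but cites Chao--Yu's Lemma~3.9, and your two-way chain-rule computation is essentially the standard proof of that result. Because your $Z$ is a genuine mixture, it also keeps every marginal shared by all the $Y_i$, which is exactly how the lemma is used later in the paper.
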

This is \cite[Lemma~3.9]{CY24} with their parameter $a=q$; their support condition means exactly that each point is contained in at most $q$ supports.

A partial hypergraph whose partial edges have size at most $r$ generates a \emph{maximal partial $r$-graph}, namely the simplicial complex generated by its partial edges.  If $P$ is a maximal partial $r$-graph and $<$ is a linear order on $V(P)$, then for $v\in V(P)$ let $M_{P,<}(v)$ be the family of faces $e$ such that $v$ is the maximum vertex of $e$ under $<$.  We say $P$ is a \emph{partial forest with respect to $<$} if for every $v$, the family $M_{P,<}(v)$ has a unique inclusion-maximal member.  If this maximal member has size $j$, then $v$ contributes one to $n_j$.  The vector $(n_1,\ldots,n_r)$ is the \emph{forest sequence} of $(P,<)$.

\begin{lemma}[{\cite[Lemma~6.5]{CY24}}]\label{lem:forest-sampling}
Let $(X_1,\ldots,X_r)$ be a symmetric random ordered edge of an $r$-graph $H$, with ratio sequence $x_1,\ldots,x_r$.  Let $P$ be a partial forest with respect to a linear order $<$, and let $(n_1,\ldots,n_r)$ be its forest sequence.  Then there is a random homomorphism $(Y_v)_{v\in V(P)}$ from $P$ to $H$ such that
\[
        \HH((Y_v)_{v\in V(P)})
        =|V(P)|\HH(X_1)+\log_2\left(\prod_{i\in [r]} x_i^{n_{r+1-i}}\right).
\]
Moreover, for every face $e$ of $P$ of size $j$, the joint distribution of $(Y_v)_{v\in e}$ is the same as that of $(X_{r-j+1},\ldots,X_r)$ up to a permutation of coordinates.
\end{lemma}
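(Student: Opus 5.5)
The natural approach is to build the random homomorphism $(Y_v)_{v\in V(P)}$ one vertex at a time, following the linear order $<$. At each step we sample the new vertex conditionally on the vertices that form its unique maximal "back-face", so that the construction is a Markov chain along the order. Write $v_1<\cdots<v_n$ for the vertices of $P$. For each $v$, let $e_v$ be the unique inclusion-maximal member of $M_{P,<}(v)$, and put $j_v\coloneqq|e_v|$. Every face whose maximum vertex is $v$ is contained in $e_v$, and $e_v\setminus\{v\}$ is a face consisting only of vertices preceding $v$.

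\textbf{Sampling step.} Suppose $(Y_u)_{u<v}$ has been defined. Let $u_1,\ldots,u_{j-1}$ be the vertices of $e_v\setminus\{v\}$, where $j=j_v$. Sample $Y_v$ from the conditional law of $X_{r-j+1}$ given $(X_{r-j+2},\ldots,X_r)=(Y_{u_1},\ldots,Y_{u_{j-1}})$. The sample should be conditionally independent of all other earlier $Y_u$ given $(Y_{u_i})_{i\in[j-1]}$.

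\textbf{Invariant.} By induction on the number of vertices placed, we maintain the following: for every face $f$ of $P$ all of whose vertices are already sampled, $(Y_u)_{u\in f}$ has the law of $(X_{r-|f|+1},\ldots,X_r)$ up to a permutation of coordinates. When $v$ is added, the only new faces are subsets of $e_v$ that contain $v$.
\begin{itemize}
\item By the induction hypothesis, $(Y_u)_{u\in e_v\setminus\{v\}}$ has the law of $(X_{r-j+2},\ldots,X_r)$ in some order. Symmetry of $(X_1,\ldots,X_r)$ lets us reorder coordinates freely, so the conditional sampling is well defined.
\item With this, $(Y_u)_{u\in e_v}$ has exactly the law of $(X_{r-j+1},\ldots,X_r)$.
\item Any subface of $e_v$ then has the law of a sub-tuple of $(X_{r-j+1},\ldots,X_r)$. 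By symmetry this is the law of the last coordinates of $X$.
\end{itemize}
This gives the "moreover" clause. It also gives the homomorphism property: each face is mapped into the support of a tuple of distinct coordinates of a random ordered edge, hence injectively into an edge of $H$.

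\textbf{Entropy count.} By the chain rule along the order and the conditional independence built into the sampling,
\[
\HH\bigl((Y_v)_{v\in V(P)}\bigr)=\sum_{v}\HH\bigl(Y_v\mid (Y_u)_{u\in e_v\setminus\{v\}}\bigr)=\sum_v \HH\bigl(X_{r-j_v+1}\mid X_{r-j_v+2},\ldots,X_r\bigr).
\]
By the definition of the ratio sequence, each term equals $\HH(X_1)+\log_2 x_{r-j_v+1}$ (using $\HH(X_i)=\HH(X_1)$ by symmetry). Grouping the vertices by $j_v=j$ gives exactly $n_j$ factors of $x_{r+1-j}$. This yields the stated formula $|V(P)|\HH(X_1)+\log_2\prod_i x_i^{n_{r+1-i}}$.

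\textbf{Main obstacle.} The delicate point is verifying the invariant for faces that mix vertices sampled at different times. This is exactly where the partial-forest hypothesis is used. Because $M_{P,<}(v)$ has a unique maximal element, every face $f$ lies inside $e_w$, where $w$ is its maximum vertex. So no face ever requires a joint law beyond what the single conditional step at $w$ guarantees. Conditional independence from the other earlier vertices is therefore harmless: it never conflicts with a marginal constraint on some face. I would check this case carefully, together with the requirement that the conditional laws match after permuting coordinates, which relies on the symmetry of $(X_1,\ldots,X_r)$.
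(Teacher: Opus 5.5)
Your proposal is correct and follows essentially the same route as the Chao--Yu argument that the paper cites for this lemma (the paper itself gives no proof): sample the vertices in the order $<$, each conditionally on its unique maximal back-face $e_v\setminus\{v\}$ and independently of everything else. Uniqueness of the maximal face then gives the correct law on every face, and the chain rule together with this conditional independence gives the entropy formula. The ``main obstacle'' you flag is not a remaining gap: your own observation that every face lies in $e_w$ for its maximum vertex $w$ already settles it completely.
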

This is \cite[Lemma~6.5]{CY24}, with $k=r$ and the partial-forest terminology from Chao--Yu's Definition~6.3 in \cite{CY24}.

%%%%%%%%%%%%%%%%%%%%%%%%%%%%%%%%%%%%%%%%
\section{Proofs}\label{SEC:Proof}\label{sec:proofs}

In this section, we present the proofs of the three bounds.  For each  family, the main step is a block-product estimate for the ratio sequence; the corresponding Tur\'an bound then follows from the entropy reduction recalled above.

\subsection{Proof of \texorpdfstring{\cref{thm:first-main,thm:arbitrary-L}}{Theorems 1.1 and 1.2}}

Throughout this subsection, let $F\coloneqq T^r_{a,b,c,d}$ and $L\coloneqq[0,d-1]\cup[b-c+d+1,b]$.

\begin{proposition}\label{prop:first-block}
Let $H$ be a finite $F$-hom-free $r$-graph with at least one edge, and let $(X_1,\ldots,X_r)$ be any symmetric random ordered edge of $H$ with ratio sequence $0<x_1\le\cdots\le x_r=1$.  Then
\[
        \prod_{j=r-b-a+1}^{r-b}x_j
        \le \frac{\Psi(r,b,L)}{\binom{r}{b}}.
\]
Consequently, by \cref{eq:hom-free-entropy-reduction}, $\pi(F)\le {\Psi(r,b,L)}/{\binom{r}{b}}$. 
\end{proposition}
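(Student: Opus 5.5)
The plan is to run the Chao--Yu mixture argument on the configuration ``an edge together with an $a$-set attached to one of its $b$-subsets''. Fix $H$ and the symmetric random ordered edge $(X_1,\ldots,X_r)$. For every $S\in\binom{[r]}{b}$ I would build a random variable
\[
Y_S=\bigl((W_1,\ldots,W_r),(U_1,\ldots,U_a)\bigr)
\]
with values in $\Omega\coloneqq V(H)^{r}\times V(H)^{a}$, as follows. First take $(W_1,\ldots,W_r)$ distributed as $(X_1,\ldots,X_r)$. Then, conditionally on $(W_i)_{i\in S}$, sample $(U_1,\ldots,U_a)$ from the conditional law of $(X_{r-b-a+1},\ldots,X_{r-b})$ given $(X_{r-b+1},\ldots,X_r)$, where $(W_i)_{i\in S}$ is identified with $(X_{r-b+1},\ldots,X_r)$ in increasing order. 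This is legitimate by symmetry, and $a+b\le r$ ensures these indices exist. This is exactly the partial-forest sampling of \cref{lem:forest-sampling} for the complex generated by $E$ and $A\cup S$. Hence $\{W_i:i\in S\}\cup\{U_1,\ldots,U_a\}$ is, with probability one, $a+b$ distinct vertices contained in an edge of $H$. By the chain rule and the definition of the $x_j$,
\[
\HH(Y_S)=r\HH(X_1)+\log_2\beta+a\HH(X_1)+\log_2\prod_{j=r-b-a+1}^{r-b}x_j ,
\]
and this value is the same for all $S$.

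Next I verify the support condition of \cref{lem:mixture} with $q=\Psi(r,b,L)$. Let $\omega=(w,u)\in\Omega$, and let $\mathcal S_\omega$ be the set of those $S$ with $\omega\in\supp Y_S$. I claim $\mathcal S_\omega$ is $L$-intersecting. Suppose instead that $S\ne S'$ lie in $\mathcal S_\omega$ and $j\coloneqq|S\cap S'|\notin L$, so that $d\le j\le b-c+d$. I would then choose $C\subseteq S'$ with $|C|=c$ and $|C\cap S|=d$. Such a $C$ exists because we need $d\le j$ elements of $S\cap S'$ and $c-d\le b-j=|S'\setminus S|$ elements of $S'\setminus S$. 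Now map $E\to w$, $B\to w|_S$, $C\to w|_C$ and $A\to u$. Each partial edge $E$, $A\cup B$, $A\cup C$ is sent injectively into an edge of $H$: $E$ because $w$ is an ordered edge, $A\cup B$ because $\omega\in\supp Y_S$, and $A\cup C$ because $\omega\in\supp Y_{S'}$ and $C\subseteq S'$. By \cref{lem:extension-equivalence} this gives $F\to H$, which is a contradiction. Hence $|\mathcal S_\omega|\le\Psi(r,b,L)$.

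\Cref{lem:mixture} then produces a mixture $Z=(W',U')$ of the $Y_S$ with
\[
\tbinom{r}{b}2^{\HH(Y_S)}\le\Psi(r,b,L)\,2^{\HH(Z)} .
\]
To bound $\HH(Z)$ from above I use subadditivity. The first component $W'$ is a mixture of identical laws, so $W'\sim(X_1,\ldots,X_r)$. Each $U'_i$ is a mixture of variables distributed like some $X_j$, hence like $X_1$ by symmetry. Therefore
\[
\HH(Z)\le \HH(W')+\sum_{i\in[a]}\HH(U'_i)=r\HH(X_1)+\log_2\beta+a\HH(X_1).
\]
Substituting this into the mixture inequality, everything cancels except $\binom{r}{b}\prod_{j=r-b-a+1}^{r-b}x_j\le\Psi(r,b,L)$, which is the claimed block bound.

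For the consequence, I would use $x_j\le1$ and monotonicity: the block $x_{r-b-a+1},\ldots,x_{r-b}$ contains $x_1,\ldots,x_{r-b}$ only partially, so some care is needed. Since the $x_j$ are increasing and the block lies among the first $r-b$ indices, it is natural to apply the block bound iteratively to the consecutive blocks $x_{r-b-a+1..r-b}$, and so on. I expect this to be the main obstacle: the statement of \cref{eq:hom-free-entropy-reduction} asks for $\prod_{i\in[r]}x_i$, not for a block. I would first try to show that $\prod_{i\in[r]}x_i\le\prod_{\text{block}}x_j$, using $x_i\le 1$ for the indices outside the block. That bound is immediate because every $x_i\le1$, so the full product is at most the block product.
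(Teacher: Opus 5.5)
Your proposal is correct and follows the paper's proof essentially step for step: you use the same forest-sampled variables indexed by $b$-subsets (built by hand via conditionally independent sampling of the $A$-block given $W_S$, which is exactly what \cref{lem:forest-sampling} provides), the same $L$-intersecting support argument with the auxiliary $c$-set $C\subseteq S'$, \cref{lem:mixture}, and the same subadditivity bound on $\HH(Z)$. The hesitation in your last paragraph about iterating the block bound is unnecessary. As you conclude yourself and as the paper does, $x_j\le 1$ immediately gives that $\prod_{j\in[r]}x_j$ is at most the block product.
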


\begin{proof}
Let $\beta\coloneqq\prod_{j\in [r]} x_j$.  Fix an $r$-set $E=\{v_1,\ldots,v_r\}$ and an $a$-set $A=\{w_1,\ldots,w_a\}$ disjoint from $E$.  For every $B\in\tbinom{[r]}{b}$, write $\hat{B}\coloneqq\left\{v_i:i\in B\right\}$ and let $P_B$ be the maximal partial $r$-graph generated by the faces $\{E, A\cup\hat{B}\}$. 
Order the vertices by $v_1<\cdots<v_r<w_1<\cdots<w_a$.  Then $P_B$ is a partial forest: the vertex $v_i$ has the prefix $\{v_1,\ldots,v_i\}$ as its unique maximal face with maximum $v_i$, and the vertex $w_\ell$ has $\hat{B}\cup\{w_1,\ldots,w_\ell\}$ as its unique maximal face with maximum $w_\ell$.  Hence the forest sequence is
\[
        n_j\coloneqq\begin{cases}2,&b+1\le j\le b+a,\\1,&\text{otherwise.}\end{cases}
\]
By \cref{lem:forest-sampling}, there is a random homomorphism $Y^B$ from $P_B$ to $H$ such that
\[
        \HH(Y^B)=(r+a)\HH(X_1)+\log_2(\beta\Gamma),
        \quad\text{where}\quad 
        \Gamma\coloneqq\prod_{j=r-b-a+1}^{r-b}x_j.
\]
This entropy is independent of $B$.

For $\phi\in V(H)^{E\cup A}$ define
\[
        \calB(\phi)\coloneqq\left\{B\in\tbinom{[r]}{b}:\phi\in\supp(Y^B)\right\}.
\]
We claim that $\calB(\phi)$ is $L$-intersecting.  Suppose not.  Then there are distinct $B_1,B_2\in\calB(\phi)$ with
\[
        d\le |B_1\cap B_2|\le b-c+d.
\]
Choose a $c$-set $C\subseteq B_2$ such that $|B_1\cap C|=d$: choose $d$ points from $B_1\cap B_2$ and $c-d$ points from $B_2\setminus B_1$.  This is possible by the displayed inequalities.  Let $\hat{C}\coloneqq\left\{v_i:i\in C\right\}$.  Since $C\subseteq B_2$, the face $A\cup\hat{C}$ belongs to $P_{B_2}$.  Hence $P_{B_1}\cup P_{B_2}$ contains the partial hypergraph defining $T^r_{a,b,c,d}$, with partial edges $E$, $A\cup\hat{B}_1$, and $A\cup\hat{C}$.  Since $Y^{B_1}$ and $Y^{B_2}$ are supported on homomorphisms from $P_{B_1}$ and $P_{B_2}$ to $H$, respectively, and $\phi$ lies in both supports, these three partial edges are mapped injectively into edges of $H$.  Thus $\phi$ gives a homomorphism from this partial hypergraph to $H$, contradicting $F$-hom-freeness by \cref{lem:extension-equivalence}.  Thus $|\calB(\phi)|\le\Psi(r,b,L)$ for all $\phi$.

By \cref{lem:mixture}, there is a mixture $Z$ of the $Y^B$ such that
\[
        \tbinom{r}{b} 2^{\HH(Y^B)}\le\Psi(r,b,L)2^{\HH(Z)}.
\]
For every $B$, the marginal of $Y^B$ on $E$ has the same law as $(X_1,\ldots,X_r)$, and every one-vertex marginal on $A$ has the same law as $X_1$.  The same holds for the mixture $Z$.  Therefore
\[
        \HH(Z)\le \HH(X_1,\ldots,X_r)+a\HH(X_1)
        =(r+a)\HH(X_1)+\log_2\beta.
\]
Substitution gives
\[
        \tbinom{r}{b} 2^{(r+a)\HH(X_1)}\beta\Gamma
        \le
        \Psi(r,b,L)2^{(r+a)\HH(X_1)}\beta,
\]
and cancellation yields the block-product bound.  Since all $x_j\le1$, the full product $\prod_{j\in [r]} x_j$ is at most this block product.  By \cref{eq:hom-free-entropy-reduction}, this gives the asserted Tur\'an bound.
\end{proof}

\begin{proof}[Proof of \cref{thm:arbitrary-L}]
Let $\calF\coloneqq\calT^r_{a,b,L}$, and let $H$ be any finite $\calF$-hom-free
$r$-graph with at least one edge.  Let $(X_1,\ldots,X_r)$ be any symmetric random ordered edge of $H$
with ratio sequence $0<x_1\le\cdots\le x_r=1$, and put
$\beta\coloneqq\prod_{j\in [r]} x_j$.

Fix an $r$-set $E=\{v_1,\ldots,v_r\}$ and an $a$-set
$A=\{w_1,\ldots,w_a\}$ disjoint from $E$.  For every
$B\in\tbinom{[r]}{b}$, define $P_B$ and $Y^B$ exactly as in the proof of
\cref{prop:first-block}.  Thus
\[
        \HH(Y^B)=(r+a)\HH(X_1)+\log_2(\beta\Gamma),
        \quad\text{where}\quad
        \Gamma\coloneqq\prod_{j=r-b-a+1}^{r-b}x_j.
\]
For $\phi\in V(H)^{E\cup A}$ define
\[
        \calB(\phi)\coloneqq\left\{B\in\tbinom{[r]}{b}:\phi\in\supp(Y^B)\right\}.
\]
We claim that $\calB(\phi)$ is $L$-intersecting.  If not, take distinct
$B_1,B_2\in\calB(\phi)$ with $j\coloneqq |B_1\cap B_2|\notin L$.  Since
$B_1$ and $B_2$ are distinct $b$-subsets of $[r]$, one has $j\le b-1$ and
$2b-j\le r$, so $j\in J_{r,b}(L)$.  Moreover
$P_{B_1}\cup P_{B_2}$ contains the partial hypergraph defining
$T^r_{a,b,b,j}$, with partial edges $E$, $A\cup\hat{B}_1$, and
$A\cup\hat{B}_2$.  Since $\phi$ lies in both supports, these partial
edges are mapped injectively into edges of $H$, giving a homomorphism from
$T^r_{a,b,b,j}$ to $H$ by \cref{lem:extension-equivalence}.  This contradicts
$\calF$-hom-freeness.  Hence $|\calB(\phi)|\le\Psi(r,b,L)$ for all $\phi$.

\Cref{lem:mixture} gives a mixture $Z$ of the variables $Y^B$ such that
\[
        \tbinom{r}{b}2^{\HH(Y^B)}
        \le \Psi(r,b,L)2^{\HH(Z)}.
\]
As in the proof of \cref{prop:first-block},
\[
        \HH(Z)\le (r+a)\HH(X_1)+\log_2\beta.
\]
Substitution and cancellation give
\[
        \prod_{j=r-b-a+1}^{r-b}x_j
        \le \frac{\Psi(r,b,L)}{\binom{r}{b}}.
\]
Since all $x_j\le1$, the full product $\prod_{j\in [r]} x_j$ is at most this
block product.  By \cref{eq:hom-free-entropy-reduction}, this gives the desired
bound.
\end{proof}

\subsection{Proof of \texorpdfstring{\cref{thm:matching-main}}{Theorem 1.3}}

The argument is parallel to the proof of \cref{thm:first-main}, with the role of an intersecting-family bound replaced by a matching-number bound.  The forbidden graph forces the family of admissible $k$-sets arising from any fixed support point to have matching number at most $s-1$.  Let $F\coloneqq T^r_{a,[k,s]}$.

\begin{proposition}\label{prop:matching-block}
Let $H$ be a finite $F$-hom-free $r$-graph with at least one edge, and let $(X_1,\ldots,X_r)$ be any symmetric random ordered edge of $H$ with ratio sequence $0<x_1\le\cdots\le x_r=1$.  Then
\[
        \prod_{j=k-a+1}^{k}x_j
        \le\frac{M(r,k,s-1)}{\binom{r}{k}}.
\]
Consequently, by \cref{eq:hom-free-entropy-reduction}, $\pi(F)\le {M(r,k,s-1)}/{\binom{r}{k}}$. 
\end{proposition}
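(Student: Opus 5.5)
The plan is to follow the proof of \cref{prop:first-block}, with $k$-sets $B$ indexing \emph{complements} of side edges. Fix an $r$-set $E=\{v_1,\ldots,v_r\}$ and an $a$-set $A=\{w_1,\ldots,w_a\}$ disjoint from $E$. For $B\in\tbinom{[r]}{k}$, write $\hat B\coloneqq\{v_i:i\in B\}$ and $\bar B\coloneqq E\setminus\hat B$. Let $P_B$ be the maximal partial $r$-graph generated by $\{E,\ A\cup\bar B\}$. This is legitimate because $|A\cup\bar B|=a+r-k\le r$ by $a\le k$. Put $\beta\coloneqq\prod_{j\in[r]}x_j$.

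First I choose a linear order on $V(P_B)$ that depends on $B$. This is harmless, since only the resulting entropy has to be independent of $B$. List the $r-k$ vertices of $\bar B$ first, then the $k$ vertices of $\hat B$, then $w_1<\cdots<w_a$. I check the partial-forest property vertex by vertex:
\begin{itemize}
\item a vertex of $E$ lies in the face $E$, and possibly also in $A\cup\bar B$; in both cases the faces with that vertex as maximum are contained in the prefix of $E$ ending at it, so this prefix is the unique maximal such face;
\item the vertex $w_\ell$ has $\bar B\cup\{w_1,\ldots,w_\ell\}$ as its unique maximal face, of size $r-k+\ell$.
\end{itemize}
So the forest sequence is $n_j=2$ for $r-k+1\le j\le r-k+a$ and $n_j=1$ otherwise. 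In $\prod_i x_i^{n_{r+1-i}}$, the doubled exponents fall exactly on $i\in[k-a+1,k]$. Hence \cref{lem:forest-sampling} gives a random homomorphism $Y^B$ from $P_B$ to $H$ with
\[
\HH(Y^B)=(r+a)\HH(X_1)+\log_2(\beta\Gamma),\qquad \Gamma\coloneqq\prod_{j=k-a+1}^{k}x_j,
\]
and this value is independent of $B$. The same lemma shows that the marginal of $Y^B$ on $E$ is $(X_1,\ldots,X_r)$ up to a permutation of coordinates, hence equal to it in law by symmetry. Each one-vertex marginal on $A$ is distributed as $X_1$.

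Next, for $\phi\in V(H)^{E\cup A}$, let $\calB(\phi)\coloneqq\{B:\phi\in\supp(Y^B)\}$. I claim $\nu(\calB(\phi))\le s-1$. Suppose $B_1,\ldots,B_s\in\calB(\phi)$ are pairwise disjoint. Then $\bigcup_i P_{B_i}$ contains the partial edges $E, A\cup\bar B_1,\ldots,A\cup\bar B_s$, which is exactly the partial hypergraph defining $T^r_{a,[k,s]}$. Since $\phi$ lies in every support, $\phi$ maps each of these partial edges injectively into an edge of $H$. By \cref{lem:extension-equivalence} this gives $F\to H$, a contradiction. Therefore $|\calB(\phi)|\le M(r,k,s-1)$ for every $\phi$.

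Finally, apply \cref{lem:mixture} with $q=M(r,k,s-1)$ to obtain a mixture $Z$ with
\[
\tbinom{r}{k}2^{\HH(Y^B)}\le M(r,k,s-1)\,2^{\HH(Z)}.
\]
The marginal facts above pass to $Z$, so subadditivity gives $\HH(Z)\le\HH(X_1,\ldots,X_r)+a\HH(X_1)=(r+a)\HH(X_1)+\log_2\beta$. Substituting and cancelling the common factor $2^{(r+a)\HH(X_1)}\beta$ yields $\Gamma\le M(r,k,s-1)/\binom rk$. Since every $x_j\le1$, the full product $\prod_{j\in[r]}x_j\le\Gamma$, and \cref{eq:hom-free-entropy-reduction} then gives the Tur\'an bound.

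The main point to get right is the $B$-dependent order and the resulting forest sequence: it must place the doubled block exactly at indices $[k-a+1,k]$. The rest is identical to \cref{prop:first-block}.
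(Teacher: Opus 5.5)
Your proposal is correct and follows the paper's proof essentially step for step: the same complexes $P_B$ generated by $\{E, A\cup\bar B\}$, the same forest sequence with the doubled block at $[k-a+1,k]$, the same matching-number bound on the support families $\calB(\phi)$, and the same mixture-plus-subadditivity cancellation. The only difference is cosmetic. Your $B$-dependent vertex order is harmless but unnecessary: the paper's fixed order $v_1<\cdots<v_r<w_1<\cdots<w_a$, and indeed any order placing $E$ before $A$, already gives the same forest sequence.
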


\begin{proof}
Let $\beta\coloneqq\prod_{j\in [r]} x_j$.  Fix an $r$-set $E=\{v_1,\ldots,v_r\}$ and an $a$-set $A=\{w_1,\ldots,w_a\}$ disjoint from $E$.  For each $D\in\tbinom{[r]}{k}$, write $\hat{D}\coloneqq\left\{v_i:i\in D\right\}$.  Let $P_D$ be the maximal partial $r$-graph generated by $\{E, A\cup(E\setminus\hat{D})\}$. 
Order the vertices by $v_1<\cdots<v_r<w_1<\cdots<w_a$.  As in \cref{prop:first-block}, this is a partial forest with one contribution in every size $1,\ldots,r$ and one additional contribution in the sizes $r-k+1,\ldots,r-k+a$.  Thus \cref{lem:forest-sampling} gives a random homomorphism $Y^D$ from $P_D$ to $H$ satisfying
\[
        \HH(Y^D)=(r+a)\HH(X_1)+\log_2(\beta\Gamma), 
        \quad\text{where}\quad
        \Gamma\coloneqq\prod_{j=k-a+1}^{k}x_j.
\]
For $\phi\in V(H)^{E\cup A}$ define
\[
        \calB(\phi)\coloneqq\left\{D\in\tbinom{[r]}{k}:\phi\in\supp(Y^D)\right\}.
\]
If $\calB(\phi)$ contained $s$ pairwise disjoint sets $D_1,\ldots,D_s$, then taking $B_i=\hat{D}_i$ gives $\bar B_i=E\setminus\hat{D}_i$, and $P_{D_1}\cup\cdots\cup P_{D_s}$ would contain the partial hypergraph defining $T^r_{a,[k,s]}$, with partial edges $E,A\cup(E\setminus\hat{D}_1),\ldots,A\cup(E\setminus\hat{D}_s)$.  Since $\phi$ lies in each support, these partial edges are mapped injectively into edges of $H$, giving a homomorphism from the partial hypergraph to $H$ and hence, by \cref{lem:extension-equivalence}, a homomorphism from $T^r_{a,[k,s]}$ to $H$.  This contradicts $F$-hom-freeness.  Therefore $\nu(\calB(\phi))\le s-1$, and
\[
        |\calB(\phi)|\le M(r,k,s-1)
        \qquad\text{for all }\phi.
\]

\Cref{lem:mixture} now gives a mixture $Z$ of the variables $Y^D$ such that
\[
        \tbinom{r}{k} 2^{\HH(Y^D)}\le M(r,k,s-1)2^{\HH(Z)}.
\]
The marginal of $Z$ on $E$ is the law of $(X_1,\ldots,X_r)$, and each vertex of $A$ has marginal $X_1$.  Thus, by subadditivity and the entropy identity above,
\[
        \HH(Z)\le \HH(X_1,\ldots,X_r)+a\HH(X_1)
        =(r+a)\HH(X_1)+\log_2\beta.
\]
Substituting the formulas for $\HH(Y^D)$ and $\HH(Z)$ gives
\[
        \tbinom{r}{k} 2^{(r+a)\HH(X_1)}\beta\Gamma
        \le
        M(r,k,s-1)2^{(r+a)\HH(X_1)}\beta.
\]
After cancellation, the block-product bound follows.  Since all $x_j\le1$, the full product $\prod_{j\in [r]} x_j$ is at most this block product.  By \cref{eq:hom-free-entropy-reduction}, this gives the asserted Tur\'an bound.
\end{proof}

\subsection{Proof of \texorpdfstring{\cref{thm:product-main}}{Theorem 1.4}}

Let $V_1,\ldots,V_m$ be pairwise disjoint sets, each of size $n$.  For $0\le r_i\le n$ and $L_i\subseteq[0,r_i]$, recall that 
\[
        \Psi(n,r_i,L_i)\coloneqq\max\left\{|\calA|:\calA\subseteq\tbinom{V_i}{r_i}\text{ and }
        \ |A\cap A'|\in L_i\text{ for all distinct }A,A'\in\calA\right\}.
\]

We first record the product set-theoretic estimate that will be used to control the support families in the proof of the product bound.

\begin{theorem}\label{thm:product-intersection}
Let $\calA\subseteq\tbinom{V_1}{r_1}\times\cdots\times\tbinom{V_m}{r_m}$ have the following property: for every two distinct tuples $A=(A_1,\ldots,A_m)$ and $B=(B_1,\ldots,B_m)$ in $\calA$, there exists $i\in[m]$ such that $|A_i\cap B_i|\in L_i$.  Then
\[
        |\calA|\le
        \max\Big\{\Psi(n,r_i,L_i)\prod_{j\ne i}\tbinom{n}{r_j} \colon i \in [m]\Big\}.
\]
\end{theorem}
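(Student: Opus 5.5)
The plan is to recast the statement as an independence-number problem on a tensor product of vertex-transitive graphs, and then apply Zhang's theorem on independent sets in direct products of vertex-transitive graphs. Tardif conjectured that $\alpha(G\times H)=\max\{\alpha(G)|V(H)|,\alpha(H)|V(G)|\}$ for vertex-transitive $G,H$, and Zhang proved it.

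\textbf{Encoding.} For each $i\in[m]$, I would define a relation $R_i$ on $\binom{V_i}{r_i}$ by declaring $A_i R_i B_i$ when $|A_i\cap B_i|\notin L_i$. This includes $A_i=B_i$ exactly when $r_i\notin L_i$, so $R_i$ carries a loop at every vertex or at none. Two distinct tuples $A,B$ violate the hypothesis iff $A_iR_iB_i$ for every $i$, that is, iff they are adjacent in the tensor (categorical) product $\Gamma=R_1\times\cdots\times R_m$. So $\calA$ is an independent set of $\Gamma$, meaning a set containing no adjacent distinct pair. The loopless part $G_i$ of $R_i$ is a Johnson-type distance graph, hence vertex-transitive, and $\alpha(G_i)=\Psi(n,r_i,L_i)$. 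The goal becomes
\[
\alpha(\Gamma)\le\max_i \alpha(G_i)\prod_{j\ne i}\tbinom{n}{r_j}.
\]

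\textbf{Splitting the coordinates.} Let $S=\{i:r_i\notin L_i\}$ be the looped coordinates and $T=[m]\setminus S$ the loopless ones.
\begin{itemize}
\item \emph{The looped part.} For $i\in S$, equality in coordinate $i$ counts as related. Fixing all coordinates except one $i\in S$ gives a fibre that must be independent in $G_i$. Hence the fibre argument yields $|\calA|\le \Psi_i\prod_{j\neq i}\binom{n}{r_j}$ whenever $T=\emptyset$.
\item \emph{Reducing to the loopless product.} In general, I would first reduce to the product over $T$. Group the tuples of $\calA$ according to their restriction to the $S$-coordinates, and treat $\prod_{i\in S}R_i$ (all looped) as one vertex-transitive reflexive factor $K$.
\item \emph{Applying Zhang.} I would prove by induction on the number of factors that
\[
\alpha(G\times K)\le\max\bigl(\alpha(G)|K|,\ \alpha(K^{\circ})|G|\bigr),
\]
where $K^\circ$ is the loopless part of $K$. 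For the loopless factors themselves, Zhang's theorem applied iteratively gives
\[
\alpha\Bigl(\prod_{i\in T}G_i\Bigr)=\max_{i\in T}\alpha(G_i)\prod_{j\in T\setminus\{i\}}|V(G_j)|.
\]
This works because the tensor product of vertex-transitive graphs is again vertex-transitive, so the induction can continue.
\end{itemize}

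\textbf{Main obstacle: mixing looped and loopless factors.} Zhang's theorem is stated for loopless graphs. A reflexive factor changes the adjacency: with loops, $(x,k)\sim(y,k')$ when $x\sim y$ and either $k\sim k'$ or $k=k'$. So the product is $G\times(K^\circ+\text{loops})$. My first attempt is to note that this is the union of $G\times K^\circ$ and $G\square$-type fibre edges, namely $G\times \mathrm{id}$.
\begin{itemize}
\item An independent set $I$ of this union is independent in $G\times K^\circ$, so $|I|\le \max(\alpha(G)|K|,\alpha(K^\circ)|G|)$ by Zhang.
\item The term $\alpha(K^\circ)|G|$ must then be converted into the desired form. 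Here $\alpha(K^\circ)$ is itself the independence number of a product of looped factors, and the fibre argument in the $S$-part bounds it by $\max_{i\in S}\Psi_i\prod_{j\in S\setminus\{i\}}\binom{n}{r_j}$.
\end{itemize}

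\textbf{Fallback.} If this bookkeeping fails, I would use the no-homomorphism lemma for vertex-transitive $\Gamma$ directly. This would mean exhibiting, for a suitable $i$, a homomorphism from $G_i$ into $\Gamma$ so that $\alpha(\Gamma)/|\Gamma|\le\alpha(G_i)/|G_i|$. I expect this to work only after the same case split on $S$, since the diagonal map into other coordinates is a homomorphism only when those coordinates are looped.
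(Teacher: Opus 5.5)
Your proof is correct and uses the same engine as the paper: $\calA$ is an independent set in a direct product of the vertex-transitive graphs $G_i$, with $\alpha(G_i)=\Psi(n,r_i,L_i)$, and Zhang's theorem finishes. The difference is your split into looped and loopless coordinates, which the paper does not need.

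The paper defines $G_i$ only on distinct pairs from the start. Adjacency in the loopless product $G_1\times\cdots\times G_m$ already forces $|A_i\cap B_i|\notin L_i$ in every coordinate, so $\calA$ is independent there. Iterating Zhang's two-factor identity then gives exactly $\max_i\Psi(n,r_i,L_i)\prod_{j\ne i}\binom{n}{r_j}$.

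Put differently, the observation you make for $G\times K$ can be applied to all coordinates at once. The loopless part of your $\Gamma$ contains $G_1\times\cdots\times G_m$ as a spanning subgraph, so $\alpha(\Gamma)\le\alpha(G_1\times\cdots\times G_m)$. The \emph{main obstacle}, the proposed induction and the fallback then all disappear; since only an upper bound is wanted, discarding the loops costs nothing.

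Your route does close as written:
\begin{itemize}
\item in the mixed case, $K^\circ$ is vertex-transitive, because $\prod_{i\in S}\mathrm{Sym}(V_i)$ acts transitively on it and preserves adjacency;
\item a single application of Zhang to $G\times K^\circ$ suffices, so no induction is needed;
\item you rightly treat $S=\emptyset$ by iterating Zhang on the $G_i$ directly, since there the union trick only gives the trivial bound $|G|$.
\end{itemize}

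Your bookkeeping does buy something: a slightly sharper inequality than the paper states. The fibre argument bounds $\alpha(K^\circ)$ by the \emph{minimum}, not the maximum, over $i\in S$ of $\Psi(n,r_i,L_i)\prod_{j\in S\setminus\{i\}}\binom{n}{r_j}$. So when two or more coordinates satisfy $r_i\notin L_i$, they enter the final bound only through the smallest of their ratios $\Psi(n,r_i,L_i)/\binom{n}{r_i}$. For the theorem as stated, the paper's one-line reduction is the cleaner argument.
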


\begin{proof}
For each $i$, let $X_i\coloneqq\tbinom{V_i}{r_i}$ and define a graph $G_i$ on $X_i$ by joining two distinct vertices $A,B\in X_i$ when $|A\cap B|\notin L_i$.  Then $\alpha(G_i)=\Psi(n,r_i,L_i)$.  It is easy to see that each $G_i$ is vertex-transitive.  In the categorical product $G_1\times\cdots\times G_m$, two tuples are adjacent exactly when their entries are adjacent in every coordinate.  Thus, if two tuples $(A_1,\ldots,A_m)$ and $(B_1,\ldots,B_m)$ of $\calA$ are adjacent in this product, then $|A_i\cap B_i|\notin L_i$ for every $i$, contradicting the defining property of $\calA$.  Hence $\calA$ is independent in the product.

We now spell out the use of Zhang's theorem.  For two vertex-transitive graphs $G,H$, Zhang's result~\cite{Zhang10} says that
\[
        \frac{\alpha(G\times H)}{|V(G)|\,|V(H)|}
        =
        \max\left\{
        \frac{\alpha(G)}{|V(G)|},
        \frac{\alpha(H)}{|V(H)|}
        \right\}.
\]
Let $P_k\coloneqq G_1\times\cdots\times G_k$.  The graph $P_k$ is vertex-transitive for every $k$, since the direct product of the automorphism groups of the factors acts transitively on $V(P_k)$.  Applying Zhang's two-factor theorem to $P_{k-1}$ and $G_k$ gives
\[
        \frac{\alpha(P_k)}{|V(P_k)|}
        =
        \max\left\{
        \frac{\alpha(P_{k-1})}{|V(P_{k-1})|},
        \frac{\alpha(G_k)}{|X_k|}
        \right\}.
\]
Induction on $k$ therefore gives
\[
        \frac{\alpha(G_1\times\cdots\times G_m)}{|X_1|\cdots |X_m|}
        =\max_{i\in[m]}\frac{\alpha(G_i)}{|X_i|}.
\]
Therefore
\[
        |\calA|\le\alpha(G_1\times\cdots\times G_m)
        =\max\Big\{\Psi(n,r_i,L_i)\prod_{j\ne i}\tbinom{n}{r_j} \colon i \in [m]\Big\}, 
\]
completing the proof of \cref{thm:product-intersection}. 
% If $r_i\in L_i$ for every $i$, choose a coordinate $i_0$ attaining the maximum, take an extremal $L_{i_0}$-intersecting family in coordinate $i_0$, and take the full product in the remaining coordinates.  Equality holds because tuples with the same coordinate in position $i_0$ have intersection size $r_{i_0}\in L_{i_0}$.
\end{proof}

Recall that $L_i=[0,d_i-1]\cup[b_i-c_i+d_i+1,b_i]$ for $i\in[m]$.  Let $F\coloneqq T^r_{a,b_1,c_1,d_1,\ldots,b_m,c_m,d_m}$ and put
\[
        N\coloneqq\prod_{i\in [m]}\tbinom{r}{b_i}
        \quad\text{and}\quad
        \Lambda\coloneqq
        \max\Big\{\Psi(r,b_i,L_i)\prod_{j\ne i}\tbinom{r}{b_j} \colon i \in [m]\Big\}.
\]

\begin{proposition}\label{prop:product-block}
Let $H$ be a finite $F$-hom-free $r$-graph with at least one edge, and let $(X_1,\ldots,X_r)$ be any symmetric random ordered edge of $H$ with ratio sequence $0<x_1\le\cdots\le x_r=1$.  Put $q_0\coloneqq a+\sum_{i\in [m]} b_i$ and $\beta\coloneqq\prod_{j\in [r]} x_j$.  Then $\beta \le \Lambda/N$. 
Consequently, by \cref{eq:hom-free-entropy-reduction},
\[
        \pi(F)\le\frac{\Lambda}{N}
        =\max_{i\in[m]}\frac{\Psi(r,b_i,L_i)}{\binom{r}{b_i}}.
\]
\end{proposition}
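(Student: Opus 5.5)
The argument follows the template of \cref{prop:first-block}, with the $L$-intersecting bound replaced by the product estimate \cref{thm:product-intersection}.

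\textbf{Setup and sampling.} Fix pairwise disjoint $r$-sets $E_i=\{v^i_1,\ldots,v^i_r\}$ for $i\in[m]$, and an $a$-set $A=\{w_1,\ldots,w_a\}$ disjoint from them. For each tuple $\mathbf B=(B_1,\ldots,B_m)\in\prod_i\binom{[r]}{b_i}$, let $P_{\mathbf B}$ be the maximal partial $r$-graph generated by the faces $E_1,\ldots,E_m$ and $A\cup\bigcup_i\hat B_i$. This last face has size $q_0\le r$.

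Order the vertices as $E_1<\cdots<E_m<A$, in the natural order inside each block. Then $P_{\mathbf B}$ is a partial forest:
\begin{itemize}
\item each $v^i_j$ has its prefix in $E_i$ as unique maximal face;
\item $w_\ell$ has $\bigcup_i\hat B_i\cup\{w_1,\ldots,w_\ell\}$ as unique maximal face.
\end{itemize}
Put $b\coloneqq\sum_i b_i$. The forest sequence is $n_j=m$ for every $j$, plus one extra count for each $j\in[b+1,b+a]$. By \cref{lem:forest-sampling} we get a random homomorphism $Y^{\mathbf B}$ with
\[
\HH(Y^{\mathbf B})=(mr+a)\HH(X_1)+\log_2(\beta^m\Gamma),\qquad \Gamma\coloneqq\prod_{j=r-q_0+1}^{r-b}x_j .
\]
This value does not depend on $\mathbf B$.

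\textbf{Support families.} For $\phi\in V(H)^{A\cup\bigcup E_i}$, let $\calA(\phi)$ be the set of tuples $\mathbf B$ with $\phi\in\supp(Y^{\mathbf B})$. We claim that any two distinct $\mathbf B,\mathbf B'\in\calA(\phi)$ have some $i$ with $|B_i\cap B'_i|\in L_i$.

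Suppose instead that $d_i\le|B_i\cap B'_i|\le b_i-c_i+d_i$ for every $i$. Choose $C_i\subseteq B'_i$ of size $c_i$ with $|B_i\cap C_i|=d_i$, exactly as in \cref{prop:first-block}. This works coordinatewise, including when $B_i=B'_i$ in some coordinates: then $|B_i\cap B_i'|=b_i$, and the requirement $b_i\le b_i-c_i+d_i$ would force $c_i=d_i$, in which case any $C_i\subseteq B_i$ works.

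The face $A\cup\bigcup_i\hat C_i$ lies in $P_{\mathbf B'}$. Hence $\phi$ maps $E_1,\ldots,E_m$, $A\cup\bigcup\hat B_i$ and $A\cup\bigcup\hat C_i$ injectively into edges of $H$. By \cref{lem:extension-equivalence} this gives $F\to H$, a contradiction.

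Therefore \cref{thm:product-intersection}, applied with $n=r$, $r_i=b_i$ and the given $L_i$, yields $|\calA(\phi)|\le\Lambda$.

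\textbf{Entropy comparison.} \cref{lem:mixture} applied to the $N$ variables $Y^{\mathbf B}$ with $q=\Lambda$ gives a mixture $Z$ with $N\,2^{\HH(Y^{\mathbf B})}\le\Lambda\,2^{\HH(Z)}$. Each marginal of $Z$ on $E_i$ is distributed as $(X_1,\ldots,X_r)$, and each $w_\ell$ has marginal $X_1$. By subadditivity and the entropy identity,
\[
\HH(Z)\le m\bigl(r\HH(X_1)+\log_2\beta\bigr)+a\HH(X_1).
\]
Cancelling gives $N\Gamma\le\Lambda$. Since all $x_j\le1$, we have $\beta\le\Gamma\le\Lambda/N$.

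Finally, $\Lambda/N=\max_i\Psi(r,b_i,L_i)/\binom{r}{b_i}$ by direct division, and \cref{eq:hom-free-entropy-reduction} gives the bound on $\pi(F)$.

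The main delicate step is the support-family claim. It requires the coordinatewise construction of the $C_i$ together with care in the degenerate coordinates where $B_i=B'_i$. The assumption $a+\sum c_i\le r$ guarantees that the face built from the $C_i$ is a legitimate partial edge.
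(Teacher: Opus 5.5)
There is a genuine gap in your sampling step. With the order $E_1<\cdots<E_m<A$, the complex $P_{\mathbf B}$ is not a partial forest once $b_{i'}\ge1$ and $b_i\ge1$ for some $i'<i$. Take $v^i_j\in\hat B_i$. Then $M_{P_{\mathbf B},<}(v^i_j)$ contains the prefix $\{v^i_1,\ldots,v^i_j\}$ of $E_i$. It also contains the face $\hat B_1\cup\cdots\cup\hat B_{i-1}\cup(\hat B_i\cap\{v^i_1,\ldots,v^i_j\})$ of $S_{\mathbf B}$. Any face containing both meets $E_{i'}$ and $E_i$, so it must lie in $S_{\mathbf B}$. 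Hence either there is no unique maximal member, or there is one of size larger than $j$.

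So \cref{lem:forest-sampling} does not give your forest sequence or your formula for $\HH(Y^{\mathbf B})$. Your count treats $E_1,\ldots,E_m$ as sampled independently, but $\bigcup_i\hat B_i$ must go injectively into a single edge of $H$, which couples the blocks. This is not just a missing justification: your conclusion $N\prod_{j=r-q_0+1}^{r-b}x_j\le\Lambda$ is false. Take $r=6$, $m=2$, $a=0$, $(b_1,c_1,d_1)=(4,2,0)$ and $(b_2,c_2,d_2)=(2,1,1)$.
\begin{itemize}
\item A homomorphism $F\to K^6_6$ would be bijective on $E_1=B_1\sqcup C_1$ and injective on $B_1\cup B_2$. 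So it would send $B_2$ onto the image of $C_1$, and then $C_1\cup C_2$ would not be mapped injectively.
\item Hence $H=K^6_6$, with a uniformly random ordering of its edge, is an admissible $F$-hom-free graph.
\item Your $\Gamma$ is the empty product $1$.
\item Here $L_1=\{3,4\}$ and $L_2=\{0\}$, with $\Psi(6,4,L_1)=5$ and $\Psi(6,2,L_2)=3$. So $\Lambda/N=\max\{5/15,3/15\}=1/3<\Gamma$.
\end{itemize}

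The repair is the paper's ordering: first all of $S_{\mathbf B}=A\cup\bigcup_i\hat B_i$, then for each $i$ the vertices of $E_i\setminus\hat B_i$. This is a genuine partial forest. $S_{\mathbf B}$ contributes to $n_1,\ldots,n_{q_0}$, and each $E_i\setminus\hat B_i$ contributes to $n_{b_i+1},\ldots,n_r$. This gives $\HH(Y^{\mathbf B})=(mr+a)\HH(X_1)+\log_2\Theta$ with $\Theta=\bigl(\prod_{j=r-q_0+1}^{r}x_j\bigr)\prod_{i\in[m]}\prod_{j\in[r-b_i]}x_j$.

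The rest of your argument then goes through unchanged: the support-family claim, the use of \cref{thm:product-intersection} and \cref{lem:mixture}, and your bound on $\HH(Z)$. Together they give $N\Theta\le\Lambda\beta^m$. You still need one more step: each $x_j$ occurs in $\Theta$ with exponent at most $m+1$, and $x_j\le1$, so $\Theta\ge\beta^{m+1}$, which yields $\beta\le\Lambda/N$. This is why, unlike \cref{prop:first-block}, the proposition bounds only the full product $\beta$ and not a block product. It also means the delicate step is the sampling, not the coordinatewise choice of the $C_i$.
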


\begin{proof}
For each $i\in[m]$, fix an $r$-set $E_i=\{v_{i,1},\ldots,v_{i,r}\}$, and let $A=\{w_1,\ldots,w_a\}$ be disjoint from all $E_i$.  For every tuple
\[
        \mathbf B=(B_1,\ldots,B_m)\in\prod_{i\in [m]}\tbinom{[r]}{b_i},
\]
write $\hat{B}_i\coloneqq\left\{v_{i,j}:j\in B_i\right\}$, and let $P_{\mathbf B}$ be the maximal partial $r$-graph on $A\cup E_1\cup\cdots\cup E_m$ generated by $\{E_1,\ldots,E_m, S_{\mathbf B}\}$, where $S_{\mathbf B}\coloneqq A\cup\bigcup_{i\in [m]}\hat{B}_i$. 

Order the vertices by putting all vertices of $S_{\mathbf B}$ first and then, for each $i$, the vertices of $E_i\setminus\hat{B}_i$ in any fixed order.  This order makes $P_{\mathbf B}$ a partial forest.  Indeed, a vertex of $S_{\mathbf B}$ has as unique maximal face the initial segment of $S_{\mathbf B}$ ending at that vertex, while the $\ell$th vertex of $E_i\setminus\hat{B}_i$ has as unique maximal face $\hat{B}_i$ together with the first $\ell$ vertices of $E_i\setminus\hat{B}_i$.  Thus the vertices of $S_{\mathbf B}$ contribute one to each of $n_1,\ldots,n_{q_0}$, and the vertices of $E_i\setminus\hat{B}_i$ contribute one to each of $n_{b_i+1},\ldots,n_r$.  Hence \cref{lem:forest-sampling} gives a random homomorphism $Y^{\mathbf B}$ from $P_{\mathbf B}$ to $H$ with
\[
        \HH(Y^{\mathbf B})=(mr+a)\HH(X_1)+\log_2\Theta,
\]
where $\Theta\coloneqq \left(\prod_{j=r-q_0+1}^{r}x_j\right) \left(\prod_{i\in [m]}\prod_{j\in [r-b_i]}x_j\right)$. 

For $\phi\in V(H)^{A\cup E_1\cup\cdots\cup E_m}$, let $\calA(\phi)\coloneqq\left\{\mathbf B:\phi\in\supp(Y^{\mathbf B})\right\}$.  We claim that $\calA(\phi)$ satisfies the hypothesis of \cref{thm:product-intersection} with ground sets $E_i$ and parameters $b_i,L_i$.  If not, there are two distinct tuples $\mathbf B,\mathbf B'\in\calA(\phi)$, where $\mathbf B'=(B'_1,\ldots,B'_m)$, such that $|B_i\cap B'_i|\notin L_i$ for every $i$.  Thus
\[
        d_i\le |B_i\cap B'_i|\le b_i-c_i+d_i
        \quad\text{for}\quad i\in[m].
\]
Choose a $c_i$-set $D_i\subseteq B'_i$ with $|B_i\cap D_i|=d_i$.  This is possible because $|B_i\cap B'_i|\ge d_i$ and $|B'_i\setminus B_i|=b_i-|B_i\cap B'_i|\ge c_i-d_i$.  Put $\hat{D}_i\coloneqq\left\{v_{i,j}:j\in D_i\right\}$.  Then $P_{\mathbf B'}$ contains the face $A\cup\bigcup_i\hat{D}_i$, so $P_{\mathbf B}\cup P_{\mathbf B'}$ contains the partial hypergraph defining $F$, with partial edges $E_1,\ldots,E_m$, $A\cup\bigcup_i\hat{B}_i$, and $A\cup\bigcup_i\hat{D}_i$.  Since $\phi$ lies in both supports, these partial edges are mapped injectively into edges of $H$; by \cref{lem:extension-equivalence} this gives a homomorphism from $F$ to $H$, a contradiction.  Therefore $|\calA(\phi)|\le\Lambda$ for all $\phi$. 
\Cref{lem:mixture} gives a mixture $Z$ of the $Y^{\mathbf B}$ such that $N2^{\HH(Y^{\mathbf B})}\le\Lambda2^{\HH(Z)}$. 
The marginal of $Z$ on each $E_i$ has the law of $(X_1,\ldots,X_r)$, and each vertex of $A$ has the marginal law of $X_1$.  Hence
\[
        \HH(Z)\le m\HH(X_1,\ldots,X_r)+a\HH(X_1)
        =(mr+a)\HH(X_1)+m\log_2\beta.
\]
After substitution and cancellation, $N\Theta\le\Lambda\beta^m$. 
Each factor $x_j$ occurs in the first product defining $\Theta$ at most once and in the remaining $m$ products at most once for each $i$.  Hence its exponent in $\Theta$ is at most $m+1$.  Since $0<x_j\le1$, this implies $\Theta\ge\beta^{m+1}$.  Therefore $N\Theta\le\Lambda\beta^m$ gives $\beta\le\Lambda/N$.
By \cref{eq:hom-free-entropy-reduction}, this gives the asserted Tur\'an bound.  The identity for $\Lambda/N$ follows from the definitions of $\Lambda$ and $N$.
\end{proof}

%%%%%%%%%%%%%%%%%%%%%%%%%%%%%%%%%%%%%%%%
\section{Refinements via homomorphic reduction}\label{sec:hom-reductions}

This section gives refinements of \cref{thm:first-main,thm:product-main} obtained by applying homomorphic reductions to the extension constructions.  Each reduction below is realized by a direct quotient map.

\begin{lemma}\label{lem:first-hom-reductions}
The following statements hold.
\begin{enumerate}[label=\textup{(\roman*)},ref=\textup{(\roman*)}]
\item\label{itm:first-hom-reduction-one-edge} Assume \cref{eq:first-assumptions}.  Then $T^r_{a,b,c,d}\to K^r_r$ if and only if $a+b+c-d\le r$,  where $K^r_r$ denotes the \emph{one-edge $r$-graph}.  Consequently, $a+b+c-d\le r$ implies $\pi(T^r_{a,b,c,d})=0$.
\item\label{itm:first-hom-reduction-common-part} Assume \cref{eq:first-assumptions}. If $0\le s\le\min\{a,r-b-c+d\}$, then $T^r_{a,b,c,d}\longrightarrow T^r_{a-s,b+s,c+s,d+s}$. 
\item\label{itm:first-hom-reduction-first-side} Assume \cref{eq:first-assumptions}. If $0\le s\le\min\{r-a-b,c-d\}$, then $T^r_{a,b,c,d}\longrightarrow T^r_{a,b+s,c,d+s}$. 
\item\label{itm:first-hom-reduction-second-side} Assume \cref{eq:first-assumptions}. If $0\le s\le\min\{r-a-c,b-d\}$, then $T^r_{a,b,c,d}\longrightarrow T^r_{a,b,c+s,d+s}$. 
\item\label{itm:product-hom-reduction} Assume \cref{eq:product-coordinate-assumptions,eq:product-size-assumptions}.  Let $s_1,\ldots,s_m\ge0$ satisfy $s_i\le r-b_i-c_i+d_i$ for $i \in [m]$ and $\sum_{i\in [m]} s_i\le a$. Then $T^r_{a,b_1,c_1,d_1,\ldots,b_m,c_m,d_m} \to T^r_{a-\sum_i s_i, b_1+s_1,c_1+s_1,d_1+s_1, \ldots, b_m+s_m,c_m+s_m,d_m+s_m}$. 
\end{enumerate}
\end{lemma}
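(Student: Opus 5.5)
The plan is to prove every item by an explicit vertex map on the underlying partial hypergraph. By \cref{lem:extension-equivalence} (applied with the target an $r$-graph, extended back), it then suffices to map the partial hypergraph of the source injectively on each partial edge into edges of the target. The private vertices $U_j$ are handled automatically, since each partial edge lands inside an $r$-edge and the leftover vertices of that edge can be assigned to $U_j$. The general device is a quotient map that identifies some vertices of $A$ (or of the side parts) with vertices of $E$ not yet used by the relevant partial edge. We must check two things: injectivity on each partial edge, and that the image partial edges contain the partial edges of the target configuration.

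For \ref{itm:first-hom-reduction-one-edge}, suppose $\phi:T^r_{a,b,c,d}\to K^r_r$. Then $\phi$ is injective on $E$, so it is a bijection $E\to K$, where $K$ is the single edge. It is also injective on $A\cup B$ and on $A\cup C$, so $\phi(A)$ avoids $\phi(B)\cup\phi(C)$, which has size $b+c-d$. Hence $a+b+c-d\le r$. Conversely, if this inequality holds, send $E$ identically onto $K$ and send $A$ injectively into $K\setminus(B\cup C)$. The resulting map is injective on $A\cup B$ and on $A\cup C$, and every partial edge lands in $K$. Then \cref{prop:hom-monotonicity} together with $\pi(K^r_r)=0$ gives $\pi(T^r_{a,b,c,d})=0$.

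For \ref{itm:first-hom-reduction-common-part}, pick $s$ vertices $A_0\subseteq A$ and $s$ vertices $S\subseteq E\setminus(B\cup C)$; this is possible because $|E\setminus(B\cup C)|=r-b-c+d\ge s$. Map $A_0$ bijectively onto $S$ and fix everything else. The image of $E$ is $E$. The image of $A\cup B$ is $(A\setminus A_0)\cup(B\cup S)$, which is injective because $S\cap B=\emptyset$. Similarly, the image of $A\cup C$ is $(A\setminus A_0)\cup(C\cup S)$. These are exactly the partial edges of $T^r_{a-s,b+s,c+s,d+s}$ with $B'=B\cup S$, $C'=C\cup S$, and $|B'\cap C'|=d+s$. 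The constraints in \cref{eq:first-assumptions} for the target hold because $a\ge s$ is part of the hypothesis. One point needs care: the target requires $a-s\ge1$. If $s=a$, the target parameter $a$ becomes $0$, and I will note that the construction still makes sense (or that it reduces to \ref{itm:first-hom-reduction-one-edge}).

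For \ref{itm:first-hom-reduction-first-side}, the point is that the target partial edge $A\cup B'$ should contain $A\cup B$ together with $s$ vertices of $C\setminus B$. The source edge $A\cup B$ is extended by $r-a-b\ge s$ private vertices $U_B$. Since a partial edge mapping into a target edge only needs to sit inside it, I will instead map the source partial hypergraph to the extension of the target. Concretely, map $s$ of the private vertices $U_B$ onto $s$ vertices of $C\setminus B$; this is possible since $c-d\ge s$. Everything else is mapped identically, with the remaining private vertices going to the target's private vertices. Then $A\cup B\cup U_B$ maps injectively onto the target edge $A\cup B'\cup U_{B'}$, while the edges $E$ and $A\cup C\cup U_C$ are unchanged. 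Item \ref{itm:first-hom-reduction-second-side} is symmetric. For \ref{itm:product-hom-reduction}, I will apply the map from \ref{itm:first-hom-reduction-common-part} coordinatewise: take disjoint sets $A_i\subseteq A$ with $|A_i|=s_i$, which is possible since $\sum_i s_i\le a$, and send $A_i$ onto $s_i$ vertices of $E_i\setminus(B_i\cup C_i)$.

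The main obstacle is bookkeeping rather than any deep step: confirming injectivity on every edge, and checking that the target parameters satisfy \cref{eq:first-assumptions} or \cref{eq:product-coordinate-assumptions,eq:product-size-assumptions}. In particular, one must verify $b+s+c-d\le r$ in \ref{itm:first-hom-reduction-first-side} and $a+b+s\le r$, both of which follow from the choice of $s$.
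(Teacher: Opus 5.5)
Your proposal is correct and follows the paper's proof. It uses the same quotient maps in (ii)--(v) and the same injectivity argument for the forward direction of (i). For the converse of (i), you map the partial hypergraph into the single edge and invoke \cref{lem:extension-equivalence}; this is the paper's explicit $r$-coloring, with the private vertices filled in automatically.

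One bookkeeping slip: in (iii) the target condition to check is $(b+s)+c-(d+s)=b+c-d\le r$, which holds automatically. It is not $b+s+c-d\le r$, which does not follow from the choice of $s$ (e.g.\ $r=4$, $a=1$, $b=c=2$, $d=0$, $s=1$).
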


\begin{proof}
For \ref{itm:first-hom-reduction-one-edge}, if such a homomorphism exists, then $A\cup B\cup C$ must be mapped injectively, because any two of its vertices lie together in one of the three edges.  Thus $|A\cup B\cup C|=a+b+c-d\le r$.

Conversely assume $a+b+c-d\le r$.  Let $U_B$ and $U_C$ be the private vertex sets added to $A\cup B$ and $A\cup C$, respectively, in the $r$-uniform extension.  Put
\[
        I\coloneqq B\cap C,\qquad
        B_0\coloneqq B\setminus C,\qquad
        C_0\coloneqq C\setminus B,\quad\text{and}\quad
        R\coloneqq E\setminus(B\cup C),
\]
and let $t\coloneqq r-a-b-c+d\ge0$.  Choose decompositions
\[
        R=R_A\sqcup R_0,\qquad
        U_B=U_{B,C}\sqcup U_{B,0},\quad\text{and}\quad
        U_C=U_{C,B}\sqcup U_{C,0},
\]
where
\[
        |R_A|=a,\quad |R_0|=t,\quad
        |U_{B,C}|=c-d,\quad |U_{B,0}|=t,\quad
        |U_{C,B}|=b-d,\quad |U_{C,0}|=t.
\]
Color the vertices with $r$ colors as follows.  Give the vertices of $I$ singleton colors; pair the vertices of $B_0$ with those of $U_{C,B}$; pair the vertices of $C_0$ with those of $U_{B,C}$; pair the vertices of $A$ with those of $R_A$; and group the vertices of $R_0,U_{B,0},U_{C,0}$ into $t$ triples.  Each color class contains at most one vertex from each edge of the extension, and the number of color classes is
\[
        d+(b-d)+(c-d)+a+t=r.
\]
Mapping each color class to the corresponding vertex of $K^r_r$ gives a homomorphism.  The final assertion follows from \cref{prop:hom-monotonicity}.

For \ref{itm:first-hom-reduction-common-part}, use a presentation of $T^r_{a,b,c,d}$ with disjoint sets $A,E$ and subsets $B,C\subseteq E$ as in the definition.  Choose subsets $S\subseteq A$ and $D\subseteq E\setminus(B\cup C)$ with $|S|=|D|=s$, and choose a bijection $S\to D$.  Identify each vertex of $S$ with its paired vertex in $D$, while fixing all other vertices of $A\cup E$.

Put
\[
        A'\coloneqq A\setminus S,\qquad
        B'\coloneqq B\cup D,\qquad
        C'\coloneqq C\cup D.
\]
Then $|A'|=a-s$, $|B'|=b+s$, $|C'|=c+s$, and $|B'\cap C'|=d+s$.  The quotient sends the three partial edges $\{E, A\cup B, A\cup C\}$ injectively onto $\{E, A'\cup B', A'\cup C'\}$, because no identified pair is contained in any one of the source partial edges.
The private vertex sets added to $A\cup B$ and $A'\cup B'$ have the same size $r-a-b$, and similarly the private vertex sets added to $A\cup C$ and $A'\cup C'$ have the same size $r-a-c$.  Extending the quotient by arbitrary bijections between the corresponding private vertex sets gives the desired homomorphism of the $r$-uniform extensions.

For \ref{itm:first-hom-reduction-first-side}, let $U_B$ and $U_C$ be the private vertex sets added to $A\cup B$ and $A\cup C$, respectively, in the source extension.  Choose subsets $S\subseteq U_B$ and $D\subseteq C\setminus B$ with $|S|=|D|=s$, and identify the vertices of $S$ with the vertices of $D$ by an arbitrary bijection.  Put $B'\coloneqq B\cup D$ and $C'\coloneqq C$.  The image of the edge $E$ is injective, the image of $A\cup B\cup U_B$ is $A\cup B'\cup(U_B\setminus S)$, and the image of $A\cup C\cup U_C$ is $A\cup C'\cup U_C$.  These are the three edges in the extension of $T^r_{a,b+s,c,d+s}$, with $|B'\cap C'|=d+s$.  This gives the required homomorphism.

Part \ref{itm:first-hom-reduction-second-side} is symmetric.  Choose $S\subseteq U_C$ and $D\subseteq B\setminus C$ with $|S|=|D|=s$, identify them bijectively, and put $B'\coloneqq B$ and $C'\coloneqq C\cup D$.  The same edge-by-edge injectivity check gives a homomorphism to the extension of $T^r_{a,b,c+s,d+s}$.

For \ref{itm:product-hom-reduction}, choose pairwise disjoint subsets $A_i\subseteq A$ with $|A_i|=s_i$, which is possible because $\sum_i s_i\le a$.  For each $i$, choose a subset $D_i\subseteq E_i\setminus(B_i\cup C_i)$ with $|D_i|=s_i$, and identify the vertices of $A_i$ with the vertices of $D_i$ by an arbitrary bijection.  Put
\[
        A'\coloneqq A\setminus\bigcup_i A_i,\qquad
        B_i'\coloneqq B_i\cup D_i,\quad\text{and}\quad
        C_i'\coloneqq C_i\cup D_i.
\]
Then $|A'|=a-\sum_i s_i$, $|B_i'|=b_i+s_i$, $|C_i'|=c_i+s_i$, and $|B_i'\cap C_i'|=d_i+s_i$.  No identified pair lies in a common partial edge, so the quotient maps
\[
        E_1,\ldots,E_m,\quad
        A\cup\bigcup_i B_i,\quad\text{and}\quad
        A\cup\bigcup_i C_i
\]
injectively onto the corresponding partial edges of the target construction.  The two side partial edges have the same sizes before and after the quotient, so their private vertex sets also have the same sizes.  Extending over those private vertices by arbitrary bijections gives the claimed homomorphism.
\end{proof}

\begin{corollary}\label{cor:hom-improved-first}\label{cor:intro-hom-reduction}\label{cor:product-hom-reduction-bound}
The following statements hold.
\begin{enumerate}[label=\textup{(\roman*)},ref=\textup{(\roman*)}]
\item\label{itm:triangle-hom-reduction-bound} Assume \cref{eq:first-assumptions}.  Let 
\begin{align*}
    L_s^{(0)} & \coloneqq[0,d+s-1]\cup[b-c+d+s+1,b+s], \\
    L_s^{(1)} & \coloneqq[0,d+s-1]\cup[b-c+d+2s+1,b+s], \\
    L_s^{(2)} & \coloneqq[0,d+s-1]\cup[p_s-q_s+d+s+1,p_s], 
\end{align*}
where $p_s\coloneqq\max\{b,c+s\}$ and $q_s\coloneqq\min\{b,c+s\}$. 
Then
\[
    \begin{aligned}
        \pi(T^r_{a,b,c,d})
        \le \min\Bigg\{&
        \min_{0 \le s\le\min\{a,r-b-c+d\}}
        \frac{\Psi(r,b+s,L_s^{(0)})}{\tbinom{r}{b+s}},\\
        &
        \min_{0 \le s\le\min\{r-a-b,c-d\}}
        \frac{\Psi(r,b+s,L_s^{(1)})}{\tbinom{r}{b+s}},~
        \min_{0 \le s\le\min\{r-a-c,b-d\}}
        \frac{\Psi(r,p_s,L_s^{(2)})}{\tbinom{r}{p_s}}
        \Bigg\}.
    \end{aligned}
\]
\item\label{itm:product-hom-reduction-bound} Let $F\coloneqq T^r_{a,b_1,c_1,d_1,\ldots,b_m,c_m,d_m}$ with parameters satisfying \cref{eq:product-coordinate-assumptions,eq:product-size-assumptions}.  For every admissible vector $\mathbf s\coloneqq(s_1,\ldots,s_m)$ as in \cref{lem:first-hom-reductions}\ref{itm:product-hom-reduction}, put $L_i(s_i)\coloneqq[0,d_i+s_i-1]\cup[b_i-c_i+d_i+s_i+1,b_i+s_i]$. 
Then
\[
        \pi(F)
        \le
        \min_{\mathbf s}
        \max_{i\in[m]}
        \frac{\Psi(r,b_i+s_i,L_i(s_i))}{\tbinom{r}{b_i+s_i}}.
\]
\end{enumerate}
\end{corollary}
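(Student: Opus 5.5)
The plan is to combine \cref{lem:first-hom-reductions} with \cref{prop:hom-monotonicity} and then apply \cref{thm:first-main} or \cref{thm:product-main} to the target construction. If $F\to F'$, then $\pi(F)\le\pi(F')$, and $\pi(F')$ is bounded by the $\Psi$-ratio attached to the parameters of $F'$. Each term in the minimum corresponds to one admissible reduction. Since the bound holds for every admissible $s$ (or $\mathbf s$), taking the minimum finishes the argument. The real content is two checks for each reduction: the target parameters must satisfy the hypotheses \cref{eq:first-assumptions} (respectively \cref{eq:product-coordinate-assumptions,eq:product-size-assumptions}), and the set $L$ produced by \cref{thm:first-main} must equal the displayed $L_s^{(\cdot)}$.

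For part \ref{itm:triangle-hom-reduction-bound}, first family: the reduction is to $T^r_{a-s,b+s,c+s,d+s}$, using \cref{lem:first-hom-reductions}\ref{itm:first-hom-reduction-common-part}.
\begin{itemize}
\item The target parameters satisfy $b'+c'-d'=b+c-d+s\le r$ and $a'+b'=a+b\le r$.
\item \cref{thm:first-main} needs $a'\ge1$. If $s=a$, then $a-s=0$, and I would handle this boundary case separately. One option is to restrict to $s<a$. A cleaner option is to observe that with $s=a$ the condition $s\le r-b-c+d$ gives $a+b+c-d\le r$, so $\pi(F)=0$ by \cref{lem:first-hom-reductions}\ref{itm:first-hom-reduction-one-edge}, and the bound holds trivially.
\item The resulting set is $L=[0,d+s-1]\cup[(b+s)-(c+s)+(d+s)+1,b+s]=L_s^{(0)}$.
\end{itemize}

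Second family: the reduction is to $T^r_{a,b+s,c,d+s}$, using \ref{itm:first-hom-reduction-first-side}.
\begin{itemize}
\item We need $d+s\le c\le b+s$ and $(b+s)+c-(d+s)\le r$, both of which follow from $s\le c-d$. We also need $a+b+s\le r$, which is exactly $s\le r-a-b$.
\item The resulting set is $L=[0,d+s-1]\cup[b+s-c+d+s+1,b+s]=L_s^{(1)}$.
\end{itemize}

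Third family: the reduction is to $T^r_{a,b,c+s,d+s}$, using \ref{itm:first-hom-reduction-second-side}. Here $c+s$ may exceed $b$, which breaks the ordering $c\le b$ required by \cref{eq:first-assumptions}. To fix this I would use the symmetry $T^r_{a,b,c,d}\cong T^r_{a,c,b,d}$, which comes from swapping the roles of $B$ and $C$. This lets us relabel so the larger side is $p_s$ and the smaller is $q_s$.
\begin{itemize}
\item The hypotheses hold: $a+p_s\le r$ follows from $a+b\le r$ and $a+c+s\le r$, and $p_s+q_s-(d+s)=b+c-d\le r$.
\item The resulting set is $L=[0,d+s-1]\cup[p_s-q_s+d+s+1,p_s]=L_s^{(2)}$.
\end{itemize}

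For part \ref{itm:product-hom-reduction-bound}, I would use \cref{lem:first-hom-reductions}\ref{itm:product-hom-reduction} and then \cref{thm:product-main} on the target.
\begin{itemize}
\item The coordinate conditions hold: $d_i+s_i\le c_i+s_i\le b_i+s_i$, and $b_i+c_i-d_i+s_i\le r$.
\item The size conditions are preserved because the sums $a+\sum b_i$ and $a+\sum c_i$ are unchanged. Also, \cref{thm:product-main} allows $a\ge0$, so no boundary issue arises here.
\item The new sets are exactly $L_i(s_i)$, as claimed.
\end{itemize}

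The main obstacle is bookkeeping at the boundary cases: $a-s=0$ in the first family, and the $b$/$c$ swap in the third. Everything else is direct substitution.
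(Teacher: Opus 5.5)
Your proposal is correct and follows the paper's implicit argument: compose each quotient map of \cref{lem:first-hom-reductions} with \cref{prop:hom-monotonicity}, then apply \cref{thm:first-main} or \cref{thm:product-main} to the target, using the side-swap isomorphism $T^r_{a,b,c+s,d+s}\cong T^r_{a,p_s,q_s,d+s}$ for the third family as the paper remarks. Your handling of the boundary case $s=a$ via part (i) of the lemma is valid (alternatively, \cref{thm:product-main} with $m=1$ already allows $a=0$), and your parameter checks are right, except that $(b+s)+c-(d+s)\le r$ in the second family follows directly from $b+c-d\le r$, not from $s\le c-d$.
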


In the third minimum of \cref{cor:hom-improved-first}\ref{itm:triangle-hom-reduction-bound}, we use the elementary isomorphism
\[
        T^r_{a,b,c+s,d+s}\cong T^r_{a,p_s,q_s,d+s}
\]
obtained by interchanging the two side partial edges when $c+s>b$.

For instance, let $T_i^r\coloneqq T^r_{i,i,i,0}$, with $r\ge 2i$.  The homomorphic reduction $T^r_{a,b,c,d}\to T^r_{a,b+s,c,d+s}$ gives
\[
        \pi(T_i^r)
        \le
        \min_{0\le t\le\min\{i,r-2i\}}
        \frac{\Psi(r,i+t,[0,t-1]\cup[2t+1,i+t])}{\tbinom{r}{i+t}}.
\]
In particular, when $r=3i-1$ and $t=i-1$, any two distinct $(2i-1)$-subsets of $[3i-1]$ intersect in at least $i-1$ points, while the allowed intersection set is $[0,i-2]\cup\{2i-1\}$.  Hence $\Psi(3i-1,2i-1,[0,i-2]\cup\{2i-1\})=1$, and
\[
        \pi(T_i^{3i-1})
        \le
        \frac{1}{\tbinom{3i-1}{2i-1}}
        =
        \frac{1}{\tbinom{3i-1}{i}}.
\]
The first nontrivial instance is $\pi(T^5_{2,2,2,0})\le 1/10$.

%%%%%%%%%%%%%%%%%%%%%%%%%%%%%%%%%%%%%%%%
\section{Concluding remarks}

$\bullet$ In two previously studied cases, the upper bounds in this work recover the upper-bound side of known exact Tur\'an results.  First, when specialized to expanded triangles, \cref{thm:first-main} gives Frankl's upper bound; the corresponding lower bound is Frankl's theorem \cite{Frankl90}, giving the known equality for $T^{2k}_{k,k,k,0}=C^{2k}_3$. 
Second, for the $4$-uniform instance $T^4_{1,3,3,2}$, the $4$-graph on five vertices with three edges, \cref{thm:first-main} gives the upper bound $1/4$; this upper bound was already obtained by Gunderson--Semeraro via a modification of de Caen's counting argument \cite{deCaen83,GundersonSemeraro17}, and the corresponding lower bound follows from their construction \cite{GundersonSemeraro17}.  It remains natural to ask for which other extension constructions the upper bounds given by this work are exact.

$\bullet$  Beyond the $L$-intersection and matching problems used here, it would be natural to look for constructions controlled by other theorems from extremal set theory such as cross-intersecting families, shadows, forbidden configurations, packing and covering problems, or stability theorems.  Conversely, sharp and stability results for such set systems may suggest new candidate extremal constructions for hypergraph Tur\'an problems.

%%%%%%%%%%%%%%%%%%%%%%%%%%%%%%%%%
\section*{Acknowledgments}
Y.C. was supported by National Natural Science Foundation of China grant 123B2012.
X.L. was supported by the Excellent Young Talents Program (Overseas) of the National Natural Science Foundation of China.
T.Z. was supported by Innovation Program for Quantum Science and Technology 2021ZD0302902.

%%%%%%%%%%%%%%%%%%%%%%%%%%%%%%%%%
\bibliographystyle{abbrv}
\bibliography{EntropyTuran}
%%%%%%%%%%%%%%%%%%%%%%%%%%%%%%%%%
\end{document}